\documentclass[11pt]{article}

\usepackage[T1]{fontenc}
\usepackage{tikz}
\usepackage{graphicx}
\usetikzlibrary{arrows.meta,calc,decorations.pathreplacing}
\usepackage{graphicx,float,hyperref} 
\usepackage{amsmath,bbm,amsthm,amssymb,amsfonts,geometry,mathtools,enumerate}
\usepackage[mathscr]{euscript}
\usepackage{geometry} 
 \let\mathscr\relax
\usepackage[scr]{rsfso}
\newtheorem{theorem}{Theorem}[section]
\newtheorem{lemma}[theorem]{Lemma}

\newtheorem{corollary}[theorem]{Corollary}

\newtheorem{definition}[theorem]{Definition}

\usepackage{xfrac}

\newcommand{\N}{\mathbb{N}}
\newcommand{\R}{\mathbb{R}}
\newcommand{\Q}{\mathbb{Q}}
\newcommand{\Z}{\mathbb{Z}}

\newcommand{\inv}{^{-1}}

\usepackage{lipsum}
\usepackage{listings}
\usepackage{color}

\DeclareMathOperator{\Diff}{Diff}

\newcommand\restr[2]{{
  \left.\kern-\nulldelimiterspace 
  #1 
  \littletaller 
  \right|_{#2} 
  }}

\begin{document}

\title{The Rotation Number for a Generic $C^1$ Family of Circle Diffeomorphisms is Not Hölder}

\author{
  Xuzheng Lang
      \thanks{Department of Mathematics, University of California, Irvine. Email: xuzhenl@uci.edu.\\
      This work was partially supported by  NSF grant DMS$-$2247966 (PI: Anton Gorodetski)}
}

\date{\today}

\maketitle

\begin{abstract}
We study the optimal regularity of the rotation number as a function of the parameter in monotone one-parameter families of circle diffeomorphisms. We prove that for an open and dense set of $C^2$ monotone families, the optimal Hölder exponent is exactly $1/2$, showing therefore that the earlier result by J. Graczyk is sharp. For a $C^1-$ dense set of $C^{1+\alpha}$ families, we show that the rotation number cannot be more regular than $\alpha/(1+\alpha)-$Hölder. As a consequence, we get that the rotation number for a generic $C^1$ family of circle diffeomorphisms is not Hölder.
\end{abstract}

\section{Introduction}

The study of circle homeomorphisms and diffeomorphisms is a foundational topic in the theory of dynamical systems, tracing its origins to Henri Poincaré's work in the late 19th century \cite{poincare1885courbes}. To understand the asymptotic behavior of orbits, Poincaré introduced the notion of the rotation number, a topological invariant that measures the average rate at which points are translated around the circle. Let us start with the formal definition. 

Suppose that $f:S^1\to S^1$ is an orientation$-$preserving homeomorphism, and let $F:\R\to \R$ be any lift of $f.$ Then one can show that 
$$\rho(F): = \lim_{|n|\to \infty}\frac{F^n(x)-x}{n}$$
exists, and is equal for all $x\in \R.$ Moreover, if $F_1,F_2$ are any two lifts of $f,$ then $\rho(F_1)-\rho(F_2)\in \Z.$ 
This fact justifies the following definition:
\begin{definition}
    For a circle homeomorphism $f:S^1\to S^1,$ its rotation number is defined as
    $$\rho(f): = \{\rho(F)\},$$
    where $\{\cdot\}$ denotes the fractional part and $F$ is any lift of $f.$
\end{definition}

A recurring and central theme in the history of circle dynamics is the impact that the regularity of a map has on its dynamical properties. An example of this theme is the following celebrated theorem, proved by Arnaud Denjoy \cite{denjoy1932courbes} in 1932: any orientation$-$preserving $C^{2}$ circle diffeomorphism with an irrational rotation number is topologically conjugate to a rigid irrational rotation. Crucially, Denjoy, in the same work, also constructed $C^{1}$ counterexamples (in fact, Herman \cite{herman1979conjugaison} later showed that $C^{1+\alpha}$ counterexamples exist for any $\alpha<1$)$-$diffeomorphisms with irrational rotation numbers that exhibit wandering intervals and thus cannot be conjugate to rigid rotations. Denjoy theory shows that sufficient regularity of the circle diffeomorphism force topological conjugacy to a rotation in the irrational case. Herman \cite{herman1979conjugaison} and Yoccoz \cite{Yoccoz1984} then developed a finer rigidity theory, showing that under arithmetic conditions on the rotation number the conjugacy inherits high regularity. In this direction, Khanin and Sinai \cite{SinaiKhanin1989} studied the smoothness of conjugacies between circle diffeomorphisms and rotations, and Khanin–Teplinsky later obtained sharp low-regularity versions of Herman’s theorem. See \cite{KhaninTeplinsky2009} for more detial.

These results concern the regularity of the conjugacy for a fixed map, assuming both smoothness of the diffeomorphism and arithmetic information about its rotation number. The present paper considers a complementary problem: instead of fixing a single diffeomorphism, we study families of diffeomorphisms and ask how the regularity of the family controls the regularity and structure of the function $t\mapsto\rho(f_t).$

More specifically, we will consider families $\{f_t\}_{t\in [0,1]}$ where each $f_t:S^1\to S^1$ is an orientation$-$preserving homeomorphism, such that there exists a lift $\{F_t\}_t$ of the family with $t\mapsto F_t$ being continuous and $F_{t_1}\leq F_{t_2}$ whenever $t_1\leq t_2.$ It is well known that for such a family the rotation number $\rho(t): = \rho(f_t)$ is, under suitable conditions, monotone and continuous, and behaves like a ``devil's staircase"$-$where the stability of rational rotation numbers under small perturbations create robust intervals in the parameter space known as "mode locking." See chapter 11 in \cite{katok_hasselblatt_1995} for a modern exposition. 

With these properties in mind, it is natural to investigate the relationship between the regularity of the family and the regularity of the rotation number. In this paper, we are concerned with families that are $C^2,\;C^1,$ and $C^{1+\alpha}:$
\begin{definition}
\label{def:C^k families}
    Let $k\in \N$. We say that a family of orientation$-$preserving circle maps $\{f_t\}_t$ is a $C^k$ family if there exists a lift $\{F_t\}_t$ of the family such that the function $F(t,x): = F_t(x)$ is $C^k.$ And for $\alpha\in (0,1),$ we say that a family is $C^{k+\alpha}$ if $F(t,x)$ is $C^{k+\alpha}.$

    Furthermore, we say that the family is monotone if $\frac{\partial F(x,t)}{\partial t}\geq 0,$ and strictly monotone if $\frac{\partial F(x,t)}{\partial t}> 0$.

    Throughout this paper, we will denote by $R_t$ the rigid circle rotation by $t,$ and by $\Diff_+^k(S^1)$ and $\Diff_+^{k+\alpha}(S^1)$ the spaces of orientation-preserving $C^k$ and $C^{k+\alpha}$ circle diffeomorphisms, respectively.
\end{definition}
The main results are the following:
\begin{enumerate}
    \item For a generic $f\in \Diff^2_+(S^1),$ the rotation number of the family $\{f_t\}_t = \{R_t\circ f\}_{t\in [0,1]}$ is exactly $1/2-$Hölder.
    \item For a generic monotone $C^2$ family, the rotation number of the family is exactly $1/2-$Hölder.
    \item For any $\alpha\in (0,1)$, the set of families whose rotation number is at most $\frac{\alpha}{1+\alpha}-$Hölder is $C^1-$dense in the space of monotone $C^{1+\alpha}$ families.
    \item For a generic monotone $C^1$ family, the rotation number of the family is not Hölder continuous.
\end{enumerate}

We will provide the formal statements of the results at the end of this section.

Some work has already been done to explore this relationship between the regularity of the family and that of the rotation number. In 1991, Graczyk \cite{graczyk1991harmonic} considered strictly monotone $C^2$ families of circle diffeomorphisms, and showed that in this case the rotation number depends Hölder continuously on the parameter value, with Hölder exponent at least $1/2.$ A similar regularity result extends to specific families of critical circle maps. In 1996, Graczyk and Światek \cite{graczyk1996critical} proved that when the family $\{f_t\}_t$ has the form $f_t = R_t\circ f_0,$ where $f_0$ is any $C^k$ ($k\geq 3$) circle homeomorphisms such that the derivative vanishes exactly at $0$ and $f^{(\ell)}(0)\neq 0$ for some $\ell,$ then the rotation number is a Hölder continuous function of $t.$ In the case when $f_0$ is a $C^\infty$ circle diffeomorphism, Matsumoto \cite{matsumoto2012derivatives} in 2012 proved that the family $\{R_t\circ f_0\}_t$ will have $\limsup_{t'\to t}(\rho(t')-\rho(t))/(t'-t)\geq 1$ at any $t$ at which $\rho(t)$ is irrational. As for parameter values at which the rotation number is rational, full differentiability can be shown under some rather restrictive conditions. Parkhe \cite{parkhe2013one} in 2013 proved that if $\{f_t\}_t$ is a $C^2$ family of circle homeomorphisms such that the rotation number $\rho(t)$ is strictly increasing in $t,$ then $\rho$ is differentiable at any $t_0$ where $\rho(t_0)\in \Q.$

The regularity of the rotation number is also closely related to questions in spectral theory. For a one-dimensional ergodic Schrödinger operator, the integrated density of states can be expressed in terms of the rotation number of the corresponding Schrödinger cocycle \cite{delyon1983rotation}, with the energy playing the role of the parameter. In this way, regularity estimates for the rotation number may be viewed as dynamical analogues of regularity estimates for the integrated density of states. This perspective was recently emphasized by Gorodetski and Kleptsyn \cite{gorodetski_kleptsyn_log_holder}, who, in 2025, obtained a dynamical proof of the 1D Craig–Simon theorem \cite{craig1983log}, \cite{craig1983subharmonicity} on log-Hölder continuity of the integrated density of states, by showing the log-Hölder continuity of the rotation number  for families of smooth circle cocycles over an ergodic base. See also \cite{Duarteetal2025} for more results on the fibered rotation number for families of circle cocycles, and \cite{EmilsdottirMonakov2026} for results on the rotation number of families of random circle diffeomorphisms and its relation to the Anderson model. Although the setting of the present paper is simpler, this connection provides additional motivation for understanding the sharp dependence of the rotation number on the regularity of the underlying family.

Returning to our discussion on the regularity of the rotation number, Gorodetski and Kleptsyn's result \cite{gorodetski_kleptsyn_log_holder} on smooth cocycles implies that if we have a $C^1$ family of circle diffeomorphisms, then the rotation number is log-Hölder continuous. It is therefore natural and interesting to ask what happens to the rotation number when our family of diffeomorphisms has regularity between $C^1$ and $C^2:$ on the one hand we know it must be at least log-Hölder, and on the other hand Graczyk's work \cite{graczyk1991harmonic} tells us it could be perhaps Hölder-1/2. But is $1/2-$Hölder the best we can do for $C^2$ families? And for a generic $C^1$ family, will Hölder continuity somehow survive? What happens with $C^{1+\alpha}$ families?

We will partially answer these questions with our main results. To state them, we need the following notations:
\begin{definition}
    For $k\in \N$, denote by $\Lambda_k$ the space of all one-parameter monotone $C^k$ families $\{f_t\}_t:[0,1]\to \Diff_+^k(S^1)$ with non-constant rotation number, i.e., $\rho(f_1)>\rho(f_0).$

    Similarly, for $\alpha\in (0,1),$ denote by $\Lambda_{k+\alpha}$ the space of all one-parameter monotone $C^{k+\alpha}$ families $\{f_t\}_t:[0,1]\to \Diff_+^{k+\alpha}(S^1)$ with non-constant rotation number.
\end{definition}

Now we are ready to formulate the {\bf statements of the main results:}

\begin{theorem}
    \label{thm:R_t f 1/2-Holder}
    Consider the space $\Diff_+^2(S^1)$ of all orientation$-$preserving $C^2$ diffeomorphisms of the circle, equipped with the $C^2$ topology. Then there is an open dense subset $E\subseteq\Diff_+^2(S^1)$ such that for all $f\in E,$ the rotation number $\rho(t): = \rho(R_t\circ f)$ is Hölder continuous with exponent exactly $1/2$ (and not with any exponent greater than $1/2$).
\end{theorem}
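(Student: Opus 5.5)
Two things have to be shown: that $\rho$ is $1/2$-Hölder for every $f$ in the open dense set $E$, and that it is not $\beta$-Hölder for any $\beta>1/2$. The lower bound comes directly from Graczyk's theorem \cite{graczyk1991harmonic}. The family $F_t=F+t$ is a $C^2$ family with $\partial_t F_t\equiv 1>0$, so it is strictly monotone, and hence $\rho$ is $1/2$-Hölder for \emph{every} $f\in\Diff^2_+(S^1)$. So all the work is in the sharpness, and the plan is to locate a parameter where $\rho$ behaves like a square root.

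We take $E$ to be the set of $f$ such that the family $R_t\circ f$ has, for some rational $p/q$, an endpoint $t_0$ of the mode-locking interval $\rho^{-1}(p/q)$ at which the fixed points of $g_{t}:=(R_{t}\circ f)^q$ (after subtracting $p$ in the lift) are all \emph{nondegenerate saddle-nodes}. Concretely, $G_{t_0}(x)-x-p\ge 0$ vanishes on a finite set, each zero $x_0$ satisfies $G_{t_0}''(x_0)\neq 0$, and $\partial_t G_t(x_0)|_{t_0}>0$. The last condition is automatic, since $\partial_t G_t=\sum_{j}(G^{q-1-j})'\cdot 1>0$. For concreteness we use $p/q=0/1$, i.e.\ $t_0=-\max_x(f(x)-x)$ (in the lift); then the condition is that $x\mapsto f(x)-x$ attains its maximum only at finitely many points, all with nonzero second derivative. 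This is an open condition in $C^2$ (a Morse-type condition on the global maximum of a $C^2$ function, stable under $C^2$ perturbation), and it is dense, since a small $C^2$ bump near a maximum point makes the maximum unique and quadratic. So $E$ is open and dense.

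Now fix $f\in E$ and consider $t=t_0+\varepsilon$ with $\varepsilon>0$ small. Then $\phi_\varepsilon(x):=F_t(x)-x\ge \varepsilon$ everywhere, and near the maximum point $x_0$ we have $\phi_\varepsilon(x)\approx \varepsilon + c(x-x_0)^2$ with $c>0$. The number of iterates $N(\varepsilon)$ needed to cross a fixed neighborhood of $x_0$ is comparable to $\int dx/(\varepsilon+c x^2)\asymp \varepsilon^{-1/2}$. Away from the neighborhoods of the finitely many maximum points, the displacement is bounded below, so a full turn takes at most $C_1\varepsilon^{-1/2}$ iterates. The main technical step is to justify this passage-time estimate through the saddle-node channel. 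I would do it by comparing orbits with the flow of $\dot x=\phi_\varepsilon(x)$, using the $C^2$ bound on $F$ and the fact that the step size $\phi_\varepsilon$ is small in the channel, so the Euler error is controlled. Alternatively, I would bound the passage time directly: each region $\{|x-x_0|\in[2^{-k-1},2^{-k}]\}$ with $2^{-k}\gtrsim\sqrt\varepsilon$ needs $\lesssim 2^{-k}/2^{-2k}=2^k$ steps, summing to $O(\varepsilon^{-1/2})$, plus $O(\varepsilon^{-1/2})$ steps in $|x-x_0|\lesssim\sqrt\varepsilon$.

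Given this, the lift advances by at least $1$ in at most $C_1\varepsilon^{-1/2}$ iterates, so $\rho(t_0+\varepsilon)\ge c_1\varepsilon^{1/2}$, while $\rho(t_0)=0$. Hence $|\rho(t_0+\varepsilon)-\rho(t_0)|/\varepsilon^\beta\to\infty$ for every $\beta>1/2$, and $\rho$ is not $\beta$-Hölder. Combined with Graczyk's bound, this shows the Hölder exponent is exactly $1/2$ for all $f\in E$. The step I expect to be the main obstacle is the passage-time bound, specifically making the flow comparison uniform in $\varepsilon$ using only $C^2$ regularity; the dyadic counting argument should sidestep most of it.
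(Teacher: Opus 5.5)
As written, the sharpness step fails because of a sign slip. With $\psi(x):=F(x)-x$, the set of $t$ with $\rho(F+t)=0$ is $[-\max\psi,\,-\min\psi]$, so your $t_0=-\max_x\psi$ is the \emph{lower} endpoint of the plateau. There $F_{t_0+\varepsilon}(x)-x=\psi(x)-\max\psi+\varepsilon\le\varepsilon$, not $\ge\varepsilon$. Fixed points persist for $0<\varepsilon<\max\psi-\min\psi$, so $\rho(t_0+\varepsilon)=0$. Your own requirement ``concretely, $G_{t_0}(x)-x-p\ge 0$'' describes the upper endpoint $t_0=-\min_x\psi$ (shifted by an integer into $[0,1)$). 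So $E$ should be defined by asking that $\psi$ attain its global \emph{minimum} at finitely many points, with $\psi''>0$ at each. Alternatively, keep the maximum and estimate $\rho(t_0-\varepsilon)$, where every point moves left by at least $\varepsilon$ per step. With that correction, the rest of your argument goes through.

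After the fix, your argument is essentially the paper's. The $1/2$-Hölder bound is Graczyk's theorem applied to the strictly monotone family $F+t$. Your dyadic count is exactly the proof of Lemma~\ref{lem:bottleneck} with $\alpha=1$, $q=1$. That count is complete on its own, so the flow comparison you worry about is unnecessary: only an upper bound on the passage time is needed.

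The one genuine difference is the choice of $E$. This is legitimate, since the theorem only asks for some open dense set.
\begin{itemize}
\item \textbf{Your route.} Fixing the $0$-plateau, whose upper endpoint is explicit in terms of $\psi$, reduces both properties to elementary facts about $C^2$ functions. Openness becomes stability of nondegenerate global minima: strict convexity near each minimum point plus a positive gap elsewhere. Density becomes adding $\varepsilon\chi$, where $\chi\ge 0$ vanishes only at one minimum point and $\chi''>0$ there. This is precisely what the paper's composition $g\circ f$ in Lemma~\ref{lem:R_tf: denseness of quadratic tangency} achieves.
\item \textbf{The paper's route.} Its $E$ contains any $f$ having some rational plateau whose periodic orbits are all quadratic parabolic. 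Openness (Lemma~\ref{lem:R_tf: openess of quadratic tangency}) is proved via persistence of mode locking and $C^2$-closeness of $q$-th iterates. That is more than is needed here, but it is the form that transfers to general monotone families in Section 3, where plateau endpoints have no explicit formula.
\end{itemize}
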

Theorem~\ref{thm:R_t f 1/2-Holder} is, in some sense, a special case of Theorem~\ref{thm:generic C^2 1/2-Holder}, which deals with general monotone $C^2$ families and provides an affirmative answer to our question as to whether the Hölder exponent $1/2$ shown in Graczyk's work \cite{graczyk1991harmonic} is generically optimal.
\begin{theorem}
    \label{thm:generic C^2 1/2-Holder}
    Consider $\Lambda_2$ equipped with the metric $d_2(\{f_t\}_t,\{g_t\}_t): = \|F(t,x)-G(t,x)\|_{C^2}$. Then there exists an open and dense subset $E\subseteq \Lambda_2$ such that for all $\{f_t\}_t\in E,$ the rotation number $\rho(t): = \rho(f_t)$ is Hölder continuous with exponent exactly $1/2$ (and not with any exponent greater than $1/2$). 
\end{theorem}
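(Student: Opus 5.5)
We split the statement into a lower bound and an upper bound. For the lower bound we would invoke Graczyk's result \cite{graczyk1991harmonic}, that $\rho$ is at least $1/2$-Hölder. That result is stated for \emph{strictly} monotone $C^2$ families, while $\Lambda_2$ only requires $\partial_t F\ge 0$. So the first step is to take $E$ inside the set of strictly monotone families. Strict monotonicity, $\partial_t F>0$ on the compact set $[0,1]\times S^1$, is $C^1$-open and hence $C^2$-open in $\Lambda_2$. It is also dense: adding $\varepsilon t$ to $F$ makes any monotone family strictly monotone, and the rotation number stays non-constant. On this set Graczyk gives Hölder continuity with exponent $\ge 1/2$. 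The real content is the upper bound, namely that no exponent $\beta>1/2$ works on an open dense set.

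For the upper bound we would use a saddle-node mechanism. Suppose that for some parameter $t_0$ and some $p/q$, the map $f_{t_0}^q$ (with its lift shifted by $-p$) has a periodic point $x_0$ that is non-degenerate quadratic and non-degenerate in the parameter, namely
\[
(F^q_{t_0}-p)(x_0)=x_0,\quad (F^q_{t_0})'(x_0)=1,\quad (F^q_{t_0})''(x_0)\neq 0,\quad \partial_t F^q_t(x_0)|_{t_0}>0 .
\]
Suppose further that $t_0$ is the right endpoint of the mode-locking plateau $\rho^{-1}(p/q)$. Then for $t=t_0+\delta$ the orbit passes through a bottleneck near $x_0$ of width $\sim\sqrt{\delta}$. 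The local normal form $y\mapsto y+a y^2+b\delta$ shows that the passage time is $\asymp \delta^{-1/2}$. Every other part of the circle is crossed in a uniformly bounded number of $q$-steps. Hence, in the standard Pomeau--Manneville estimate,
\[
\rho(t_0+\delta)-p/q \asymp \frac{1}{q}\cdot\frac{1}{\delta^{-1/2}+O(1)}\asymp \delta^{1/2}.
\]
We would make this rigorous by comparing the $C^2$ map $F^q_t-p$ near $x_0$ with the flow of $\dot y=ay^2+b\delta$. With $C^2$ control the error is $O(y^3+\delta y)$, which does not change the order of the passage time. This estimate already gives $|\rho(t_0+\delta)-\rho(t_0)|\ge c\,\delta^{1/2}$, so $\rho$ is not $\beta$-Hölder for any $\beta>1/2$.

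It remains to show that the families having such a saddle-node endpoint form an open dense set. For openness, the defining conditions are transversality conditions ($C^2$-open) on the $C^2$ function $(t,x)\mapsto F^q_t(x)-p-x$. At a fixed $q$ the nondegeneracy persists under $C^2$ perturbation, with $t_0$ and $x_0$ moving continuously by the implicit function theorem. We also need the plateau endpoint to remain governed by a single quadratic tangency. This holds if at $t_0$ all other fixed points of $F^q_{t_0}-p$ have been destroyed, i.e. $F^q_{t_0}-p-x>0$ away from $x_0$, and that strict inequality is open. For density, since $\rho$ is non-constant it takes some rational value $p/q$ with $q$ small on an interior plateau. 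Near its right endpoint $t_0$ the graph of $F^q_{t_0}-p$ touches the diagonal from above only at a set of tangency points. A small $C^2$ perturbation, namely a bump added to $F$ localized in $t$ near $t_0$ and in $x$ near one point of the orbit, makes the last tangency unique and quadratic. Monotonicity of the perturbed family should be kept by making the bump's $t$-derivative small relative to $\partial_t F$, which is available once we first pass to strictly monotone families. Strict monotonicity also yields $\partial_t F^q>0$.

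We expect the main obstacle to be the density step. There one must control how a local $C^2$ perturbation of $F$ propagates through the composition $F^q$. The perturbed point must be the unique last contact, with nonzero second derivative, without creating new tangencies elsewhere on the periodic orbit. We would handle this by perturbing near a single orbit point and using the chain rule: a perturbation of size $\varepsilon$ in $C^2$ near one point changes $(F^q)''$ at $x_0$ by a controlled nonzero multiple. If that proves delicate, we would instead adapt a Kupka--Smale type argument to the two-parameter map $(t,x)\mapsto F^q_t(x)-x$.
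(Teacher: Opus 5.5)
Your lower bound (restricting $E$ to strictly monotone families, which are open and dense, so that Graczyk's theorem applies; the paper leaves this implicit), your bottleneck estimate, and your openness argument (implicit function theorem for the contact, plus strict positivity of $F^q_{t_0}-p-x$ off the orbit of $x_0$) are sound. One minor point: with only $C^2$ control the Taylor remainder is $o(y^2)$, not $O(y^3)$. You only need the one-sided bound $F^q_{t_0+\delta}(x)-p-x\ge c(|x-x_0|^2+\delta)$, which follows from quadratic contact and $\partial_tF^q_t\ge c>0$, as in Lemma~\ref{lem:bottleneck}.

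\textbf{The gap is in the density step}, namely the claim that a bump localized near one orbit point ``makes the last tangency unique.'' At the right endpoint $t_0$, the contact set $Z=\{x: F^q_{t_0}(x)-p=x\}$ is an arbitrary closed $f_{t_0}$-invariant set. It may contain infinitely many periodic orbits, whole intervals, or all of $S^1$ (when $f_{t_0}^q=\mathrm{id}$). A perturbation supported near $x_0$ does not change $F^q_{t_0}$ at any point whose $q$-orbit misses its support, so all those contacts survive. Your chain-rule mechanism only changes $(F^q)''$ along the orbit of $x_0$. Suppose the bump is nonnegative and vanishes to first order on that orbit, which is the natural way to keep $x_0$ a contact while raising $(F^q)''$. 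Then $t_0$ is still the right endpoint, and the other, possibly degenerate, contacts are still present there. So the perturbed family is not in your open set. The real issue is removing the existing contacts, not avoiding new ones, and the proposal gives no mechanism for that; the Kupka--Smale fallback is named but not carried out.

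The paper resolves this with a \emph{global} perturbation before any local one. It replaces $f_t$ by $h\circ f_t$, where $h$ is $C^2$-close to the identity, has a lift $H\ge\mathrm{id}$, and has fixed-point set exactly the orbit $\{f_{t_0}^k(x_0)\}$. Since $H\ge\mathrm{id}$ and $F_{t_0}$ is increasing, $(H\circ F_{t_0})^q-p\ge F^q_{t_0}-p\ge\mathrm{id}$. Equality $(H\circ F_{t_0})^q(x)-p=x$ forces every point of the orbit of $x$ to lie in $\mathrm{Fix}(h)$. So at the same parameter $t_0$, the orbit of $x_0$ becomes the unique contact orbit. Only then does the paper add $\varepsilon\phi$ with $\phi\ge0$ and $\phi=\phi'=0$, $\phi''=1$ on the orbit; this is your chain-rule step, and it makes the contact quadratic.

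A localized bump could also work if it pushed $F$ \emph{down} near $x_0$. The plateau would then extend to some $t_1>t_0$, at which every orbit missing the bump has already left the diagonal by strict monotonicity. You would still need a shape argument showing that the last contact inside the bump is unique and nondegenerate, and the proposal does not supply one.
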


Theorem~\ref{thm:generic C^1 not Holder} tells us, somewhat surprisingly, that Hölder continuity does not survive for a generic $C^1$ family.
\begin{theorem}
    \label{thm:generic C^1 not Holder}
    Consider $\Lambda_1$ equipped with the metric $d_1(\{f_t\}_t,\{g_t\}_t): = \|F(t,x)-G(t,x)\|_{C^1}$.  Then the set of families in $\Lambda_1$ for which the rotation number is not Hölder continuous is dense $G_\delta.$
\end{theorem}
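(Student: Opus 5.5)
The plan is a standard Baire-category argument, with the density input coming from the $C^{1+\alpha}$ result announced in item 3 of the introduction. For $\alpha\in(0,1)$, $C>0$ and $n\in\N$, let
$$U_{\alpha,C,n}:=\Big\{\{f_t\}_t\in\Lambda_1:\ \exists\, t,s\in[0,1],\ 0<|t-s|<\tfrac1n,\ |\rho(t)-\rho(s)|>C|t-s|^{\alpha}\Big\}.$$
A rotation number that is Hölder continuous with some exponent $\alpha_0>0$ is also Hölder with every smaller exponent. So the set of families whose rotation number is not Hölder is
$$\bigcap_{m\in\N}\bigcap_{C\in\N}\ U_{1/m,C,1},$$
since it suffices to test exponents $1/m$ and the constraint $|t-s|<1$ is vacuous. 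This is a countable intersection. It remains to show that each $U_{1/m,C,1}$ is open and dense in $(\Lambda_1,d_1)$, and that $\Lambda_1$ is a Baire space.

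\textbf{Openness.} The map $\{f_t\}_t\mapsto\rho(f_t)$ is continuous in the $C^0$ topology for each fixed $t$, since the rotation number depends continuously on the homeomorphism. The defining condition is a strict inequality at a fixed pair $(t,s)$. Hence every family $d_1$-close to one in $U$ still satisfies it at the same pair, and $U$ is open.

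\textbf{Baire property.} The set $\Lambda_1$ is defined inside the complete metric space of monotone $C^1$ lifts by the open condition $\rho(f_1)>\rho(f_0)$. This space of lifts is closed, because monotonicity and being a diffeomorphism for each $t$ pass to $C^1$ limits; the diffeomorphism condition is open for $C^1$ maps $F_t$ with $\partial_xF>0$, and I would use the version with positive derivative. An open subset of a complete metric space is topologically complete, so Baire's theorem applies.

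\textbf{Density (main step).} Fix a family in $\Lambda_1$ and $\varepsilon>0$. First approximate it in $C^1$ by a monotone $C^2$ family, hence one in $\Lambda_{1+\alpha}$ for any $\alpha$. I would do this by convolving the lift $F(t,x)$ in both variables with a smooth kernel. This keeps $\partial_tF\ge0$ and $\partial_xF>0$, as well as periodicity in $x$. To preserve the strict inequality $\rho(f_1)>\rho(f_0)$, I would use small reparametrization near the endpoints, or simply the openness of that condition.

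Next choose $\alpha>0$ small enough that $\frac{\alpha}{1+\alpha}<\frac1m$. Result 3 of the introduction gives, $C^1$-close to this family, a family in $\Lambda_{1+\alpha}$ whose rotation number is at most $\frac{\alpha}{1+\alpha}$-Hölder. Such a rotation number is then not $\frac1m$-Hölder with any constant, so in particular it fails the constant $C$ at some pair $t,s$. This family therefore lies in $U_{1/m,C,1}$ within $\varepsilon$ of the original.

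The obstacle is entirely concentrated in this density statement, namely the $C^{1+\alpha}$ construction, which the paper proves separately. Here I would only need to check that its notion of "at most $\frac{\alpha}{1+\alpha}$-Hölder" means failure of Hölder continuity for every exponent $\beta>\frac{\alpha}{1+\alpha}$ with every constant. If instead it is only a local statement, the same argument goes through by using the sets $U_{1/m,C,n}$, which are handled identically.
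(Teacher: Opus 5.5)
Your argument is correct and is essentially the paper's own proof (the concluding corollary of Section 4). The sets of families that fail the $(C,1/m)$-Hölder bound at some pair of parameters are open by $C^0$-continuity of the rotation number, and dense by Theorem~\ref{thm:alpha/(1+alpha) Holder in C^(1+alpha) dense} combined with the $d_1$-density of $C^2$ families; one then intersects over $C$ and $m$. You are more careful than the paper about the Baire property, but one step needs correcting: being a diffeomorphism does not pass to $C^1$ limits. However, $\partial_x F>0$ is an open condition, so $\Lambda_1$ is an open subset of the closed (hence complete) space of $C^1$ lifts with $F(t,x+1)=F(t,x)+1$ and $\partial_t F\ge 0$, which is all Baire's theorem needs.
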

And finally, Theorem~\ref{thm:alpha/(1+alpha) Holder in C^(1+alpha) dense} gives us some idea about what could happen with $C^{1+\alpha}$ families.

\begin{theorem}
    \label{thm:alpha/(1+alpha) Holder in C^(1+alpha) dense}
    Let $\alpha\in (0,1).$ Then the set of families in $\Lambda_{1+\alpha}$ for which the rotation number is not $\beta-$Hölder for any $\beta>\frac{\alpha}{1+\alpha}$ is dense under the $d_1$ metric.
\end{theorem}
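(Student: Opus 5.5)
The plan is to perturb a given family in $\Lambda_{1+\alpha}$ slightly in $C^1$ so that a parabolic fixed point appears whose germ is only $C^{1+\alpha}$ (like $x\mapsto x + c|x|^{1+\alpha}$), and then to show that the width of the unlocked parameter window near that value forces $\rho$ to jump by roughly $\delta^{\alpha/(1+\alpha)}$ over a parameter interval of length $\delta$. The mechanism is the same as in the $C^2$ case (Theorem~\ref{thm:generic C^2 1/2-Holder}), where a quadratic saddle-node gives $\rho(t)-p/q\sim \sqrt{t-t_0}$. Replacing the quadratic tangency by a tangency of order $1+\alpha$ should change the passage time and hence the exponent.

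\textbf{Step 1 (reduction to a rational endpoint).} Take $\{f_t\}_t\in\Lambda_{1+\alpha}$ and $\varepsilon>0$. Since $\rho(f_1)>\rho(f_0)$, pick a rational $p/q$ in between and let $t_0=\max\{t:\rho(f_t)=p/q\}$, the right endpoint of the mode-locking interval. By a small $C^{1+\alpha}$ perturbation (hence also small in $C^1$) I can assume the following: $f_{t_0}^q$ has finitely many periodic orbits of period $q$, all but one are hyperbolic, and one orbit through a point $x_0$ is a simple parabolic point with $F^q_{t_0}(x)-x-p\ge 0$ vanishing only at $x_0$. I will also arrange $\partial_t F>0$ near $t_0$.

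\textbf{Step 2 (inserting a $C^{1+\alpha}$ cusp).} Near $x_0$, replace $F_{t}$ on a tiny neighborhood $U$ of size $r$ by a map whose $q$-th iterate near $x_0$ looks like $x\mapsto x+c|x-x_0|^{1+\alpha}+(t-t_0)$ (after the hyperbolic-free part of the orbit, which only contributes a bounded $C^{1+\alpha}$ distortion). This modification changes $F$ by $O(r^{1+\alpha})$ in $C^0$ and $O(r^\alpha)$ in $C^1$, so it is $C^1$-small. Its $C^{1+\alpha}$ norm stays bounded but is not small, which is consistent with the statement only asking for $d_1$-density. The modification must also keep monotonicity in $t$ and keep the family in $\Lambda_{1+\alpha}$; both follow from smallness together with $\partial_t F>0$ near $t_0$.

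\textbf{Step 3 (passage-time estimate), the main obstacle.} For $t=t_0+\delta$, an orbit of $f_t^q$ must cross the bottleneck near $x_0$. The number of iterates $N(\delta)$ needed to cross is comparable to the integral of $dx/(\delta+c|x|^{1+\alpha})$ over $|x|<r$, which is of order $\delta^{-\alpha/(1+\alpha)}$. Outside $U$, orbits traverse in a bounded number $M$ of $q$-iterates, uniformly in small $\delta$. Hence $\rho(f_t)-p/q\approx 1/(q(N(\delta)+M))\gtrsim \delta^{\alpha/(1+\alpha)}$. Making this rigorous needs two ingredients. The first is a discrete-versus-continuous comparison of the orbit with the flow of $\dot x=\delta+c|x|^{1+\alpha}$, which is valid because the step sizes are tiny relative to the local scale; I would use a Denjoy/Koksma-style distortion argument. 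The second is control of the nonlinear $C^{1+\alpha}$ error terms from the other points of the orbit: these are only $O(|x|^{1+\alpha})$ perturbations and can be absorbed by adjusting $c$ within a factor of 2. A lower bound on the rotation-number increment suffices, and it follows from an upper bound on $N(\delta)$, which is the easier direction.

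\textbf{Step 4 (conclusion).} Since $\rho(t_0)=p/q$ and $\rho(t_0+\delta)-\rho(t_0)\ge C\delta^{\alpha/(1+\alpha)}$ for all small $\delta$, the function $\rho$ is not $\beta$-Hölder for any $\beta>\alpha/(1+\alpha)$. Combined with the $C^1$-smallness of the perturbations in Steps 1–2, this gives density in $(\Lambda_{1+\alpha},d_1)$.
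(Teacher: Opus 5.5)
Your architecture matches the paper's. You sit at the right endpoint $t_0$ of a mode-locking interval, where there is a single periodic orbit and $T\ge\mathrm{id}$. You insert an $|x-x_0|^{1+\alpha}$ cusp by a $C^1$-small localized perturbation at one point of the orbit. You then bound the passage time through the bottleneck.

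Two steps, as written, do not hold.

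\emph{The other bottlenecks.} Step~3 treats $x_0$ as the only bottleneck and asserts that orbits traverse the complement of $U$ in a bounded number of $q$-iterates. For $q\ge 2$ this is false. Every $x_k=f_{t_0}^k(x_0)$ is a fixed point of $f_{t_0}^q$, so $F^q_{t_0}-x-p$ vanishes on the whole orbit, not only at $x_0$. A point going once around under $T+c\delta$ must therefore clear $q$ bottlenecks. The cusp does propagate to each $x_k$, but only through the conjugacy $g^k\circ g^q=g^q\circ g^k$, which you must invoke. The paper does this with $H=F^k$ and $m\le H'\le M$, obtaining $S(H(x))=H(S(x))\ge H(x)+m\varepsilon M^{-(1+\alpha)}|H(x)-H(0)|^{1+\alpha}$.

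\emph{The error terms.} You say the $O(|x|^{1+\alpha})$ error terms from the rest of the orbit ``can be absorbed by adjusting $c$ within a factor of 2.'' That is not an argument: an additive error of the same order as the main term, with an uncontrolled constant, cannot be absorbed. What actually gives the cusp inequality is the sign $T\ge\mathrm{id}$ at the endpoint of the tongue. The paper perturbs additively, $G=F+\varepsilon|\phi|^{1+\alpha}$ with $\phi=F^{-(q-1)}$ near $x_{q-1}$ (normalizing $x_0=0$). Then near $x_0$ one has exactly $S=T+\varepsilon|x|^{1+\alpha}\ge x+\varepsilon|x|^{1+\alpha}$. There are no error terms, and the other orbit points enter only multiplicatively, through bounded derivatives.

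Smaller points:

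\emph{Step~1.} At $t_0=\max\rho^{-1}(p/q)$ there can be no hyperbolic periodic orbits at all, since they would persist and keep $\rho=p/q$ to the right of $t_0$. What you need is a single periodic orbit with $T\ge\mathrm{id}$. The paper obtains this by composing with an $h$ whose only fixed points are the orbit of $x_0$ and whose lift satisfies $H\ge\mathrm{id}$. ``Simple parabolic'' is neither needed nor meaningful for a $C^{1+\alpha}$ map.

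\emph{Step~2.} Building $(t-t_0)$ into the model is unnecessary. If it is imposed for all $t$, the modification is neither $C^1$-small nor obviously monotone away from $t_0$. A $t$-independent perturbation keeps monotonicity and joint $C^{1+\alpha}$ regularity for free, and the shift comes from $F^q_{t_0+\delta}\ge T+c\delta$ by strict monotonicity.

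\emph{Step~3.} You need no flow comparison or distortion argument. With $G(x)-x\ge c\delta+c|x|^{1+\alpha}$, the region $|x|\le r:=\delta^{1/(1+\alpha)}$ costs $O(\delta^{-\alpha/(1+\alpha)})$ steps. Each dyadic shell $[2^kr,2^{k+1}r]$ costs $O((2^kr)^{-\alpha})+1$ steps, and these sum geometrically to the same bound. This is exactly Lemma~\ref{lem:bottleneck}.
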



\paragraph{Structure of the paper.}
    In section 2, we prove Theorem~\ref{thm:R_t f 1/2-Holder} and develop the key bottleneck estimation in Lemma~\ref{lem:bottleneck} which we will use multiple times in later sections. In section 3 we prove Theorem~\ref{thm:generic C^2 1/2-Holder} where the analysis remains more or less the same as in Theorem~\ref{thm:R_t f 1/2-Holder}, but put in a more general setting. In section 4, we first prove Theorem~\ref{thm:alpha/(1+alpha) Holder in C^(1+alpha) dense}, which shares the same key ideas as in the proof of Theorem~\ref{thm:R_t f 1/2-Holder}, and then Theorem~\ref{thm:generic C^1 not Holder}, which follows more or less immediately via an application of the Baire Category Theorem.

\section{Generic Optimality of $1/2-$Hölder for $\{R_t\circ f\}_t$}
This section will be dedicated to the proof of Theorem~\ref{thm:R_t f 1/2-Holder}: we will consider the space $\Diff_+^2(S^1)$ equipped with the $C^2$ topology, and construct an open and dense subset $E\subseteq\Diff_+^2(S^1)$ such that for all $f\in E,$ the rotation number of the family $\rho(t): = \rho(R_t\circ f)$ is exactly $1/2-$Hölder. 

Let us first recall the following result from Graczyk:
\begin{theorem}\cite[Corollary 1]{graczyk1991harmonic}
\label{thm:Holder 1/2} 
    Suppose that $\{f_t\}_t\subseteq \Diff_+^2 (S^1)$ for $t\in [0,1]$ is a strictly monotone $C^2$ family of orientation$-$preserving circle diffeomorphisms. Then the rotation number $\rho(t): = \rho(f_t)$ is $1/2-$Hölder continuous. 
\end{theorem}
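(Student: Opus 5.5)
Our plan for Theorem~\ref{thm:Holder 1/2} is to reduce the estimate to a bound on how quickly a single orbit can advance once the parameter is increased. By strict monotonicity and compactness, there is a constant $c>0$ with $\partial_t F(t,x)\ge c$ for all $(t,x)$. Fix $t<t'$ with $\delta=t'-t$ small; we want $\rho(t')-\rho(t)\le C\delta^{1/2}$. Let $p/q$ be a rational in $[\rho(t),\rho(t')]$ with $q$ minimal. We separate two regimes. If $q\ge K\delta^{-1/2}$, then the Farey structure, together with Denjoy–Koksma–type control on $f_t$ and $f_{t'}$ up to time $q$, gives $\rho(t')-\rho(t)\lesssim 1/q\lesssim \delta^{1/2}$; this is the easier half. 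The harder half is to show that, for a suitable constant $K$, a rational with denominator $q\le K\delta^{-1/2}$ cannot sit strictly inside an interval of length much bigger than $\delta^{1/2}$ without the whole interval being spanned by a short interval of parameters. Equivalently, we aim to show that moving the parameter by $\delta$ shifts $F_{t'}^n$ relative to $F_t^n$ by at least a definite amount once $n\sim\delta^{-1/2}$.

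The core step is a lower bound on the displacement
\[
D_n(x)=F_{t'}^n(x)-F_t^n(x).
\]
We will write $D_{n+1}(x)\ge F_{t'}(F_{t'}^n x)-F_{t'}(F_t^n x)+c\delta$ and use the mean value theorem. Iterating gives
\[
D_n(x)\ \ge\ c\delta\sum_{k=0}^{n-1}(F_{t'}^{\,n-k-1})'(\xi_k)
\]
for intermediate points $\xi_k$. The $C^2$ hypothesis enters through bounded distortion. As long as the orbit segments involved are short, in the sense that $\sum_k |F_t^k x-F_{t'}^k x|$ stays bounded (a Denjoy-type argument), the derivative of $F_{t'}^m$ at $\xi_k$ is comparable to its derivative along the true orbit. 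Averaging over $x$ and using $\int (f^m)'=1$ then yields $\int D_n\,dx\gtrsim \delta n$ in the regime where distortion is controlled.

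This linear-in-$n$ growth is too weak, and the key improvement is a quadratic gain. We plan to exploit the fact that $\int D_{n}$ satisfies a recursion in which each step adds $c\delta$ and also the accumulated displacement, giving growth like $\delta n^2$ before distortion breaks down. Once $D_n\ge 1$ for $n\sim\delta^{-1/2}$, some point advances a full turn, so $q\rho(t')-q\rho(t)$ increases by at least one over blocks of length $q\approx\delta^{-1/2}$. This bounds the number of rationals with small denominator that the interval can cross, and we conclude $|\rho(t')-\rho(t)|\le C\delta^{1/2}$.

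We expect the main obstacle to be keeping distortion control through the quadratic-growth step. The danger is that $(f^m)'$ degenerates near a periodic orbit, namely in the mode-locking case with $p/q$ of small denominator. We would handle this by working near the endpoint of a locking interval, where $f^q_t$ has a parabolic-like fixed point. Graczyk's harmonic-measure approach suggests comparing with the model $x\mapsto x+a x^2+\delta$. For that model the passage time through the bottleneck is $\sim\delta^{-1/2}$, which is exactly where the exponent $1/2$ comes from. Uniformity of the constants in $q$ comes from the $C^2$ distortion bound on $f^q$ over the fundamental domains.
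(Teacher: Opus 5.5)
The paper contains no proof of this statement. It quotes it from Graczyk \cite{graczyk1991harmonic} and uses it as a black box, so your sketch has to stand on its own. As written it does not, because your core step runs in the wrong direction. Hölder continuity is an \emph{upper} bound on $\rho(t')-\rho(t)$. Since $|F^n(x)-x-n\rho(F)|<1$ for every lift, one has $n(\rho(t')-\rho(t))<D_n(x)+2$ for all $x$ and $n$. What you need is therefore an \emph{upper} bound on the displacement, e.g.\ $D_n(x)\le C$ for some $x$ at $n\asymp\delta^{-1/2}$. Equivalently, you need a \emph{lower} bound on the number of iterates an orbit of $f_{t'}$ needs to gain a full turn on the corresponding orbit of $f_t$.

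Every estimate you set up is a lower bound: $D_{n+1}\ge\cdots+c\delta$, $\int D_n\gtrsim\delta n$, quadratic growth, and $D_n\ge 1$ at $n\sim\delta^{-1/2}$. Such bounds can only give $\rho(t')-\rho(t)\gtrsim\delta^{1/2}$, which is the anti-Hölder direction proved in the paper's Lemma~\ref{lem:bottleneck}. Your closing step, ``this bounds the number of rationals \dots\ and we conclude $|\rho(t')-\rho(t)|\le C\delta^{1/2}$'', does not follow from them.

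The lower bounds are also false in general. Suppose $t<t'$ lie inside a plateau on which $f^q_t$ has a hyperbolic attracting and a hyperbolic repelling orbit. For all $x$ outside a set of measure $O(\delta)$, $D_n(x)$ tends (along each residue class mod $q$) to the $O(\delta)$ displacement of the attracting orbit. Since $0\le D_n<2$ everywhere, $\int_0^1 D_n\,dx$ stays of order $\delta$ for large $n$ and never becomes of order one.

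The same inversion appears in your last paragraph. To get Hölder continuity from the model $x\mapsto x+ax^2+\delta$, you need the \emph{upper} comparison $F^q_{s+\delta}(x)-p-x\le C(x-x_0)^2+C'\delta$ near a parabolic point. That comparison gives a passage time $\gtrsim(CC'\delta)^{-1/2}$.

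Your first regime is fine, and in fact it is purely arithmetic. By minimality of $q$, the interval $[\rho(t),\rho(t')]$ contains no fraction $j/(q-1)$, so its length is $<1/(q-1)$. In the second regime, the right reduction is to a one-sided estimate at plateau endpoints: if $s=\sup\rho^{-1}(p/q)$ and $\delta\lesssim q^{-2}$, then $\rho(s+\delta)-p/q\le C\delta^{1/2}$ with $C$ \emph{independent of} $p/q$ (and symmetrically at the left endpoint).

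For fixed $q$ this follows from the upper comparison above. However, the constants $C,C'$ are $\sup|(F^q_s)''|$ and $\sup\partial_tF^q$ near the parabolic orbit, and
\[
\partial_tF^q_s(x)=\sum_{k=0}^{q-1}(F_s^{q-k-1})'(F_s^{k+1}x)\,\partial_tF(s,F_s^kx)
\]
involves derivatives along partial orbits, which Denjoy--Koksma does not control. Take an interval $J$ around the parabolic point whose first $q$ iterates are disjoint. $C^2$ distortion on $J$ gives $|(F^q_s)''|\lesssim 1/|J|$ and $\partial_tF^q_s\lesssim |J|\sum_{j<q}1/|f_s^j(J)|$. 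So $CC'$ is governed by a harmonic sum of the lengths of the orbit intervals, and you must show it scales correctly with $q$.

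That uniform control is where the actual work lies. Your sentence ``uniformity of the constants in $q$ comes from the $C^2$ distortion bound on $f^q$ over the fundamental domains'' asserts it but gives no mechanism for it.
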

This means our work in this sections reduces to finding an open and dense subset $E\subseteq \Diff_+^2(S^1)$ such that the families $\{R_t\circ f\}_{t\in [0,1]}$ with $f\in E$ cannot have rotation number with regularity better than $1/2-$Hölder. To explain our strategy, we need the following definition:
\begin{definition}
    Let $f$ be an orientation$-$preserving circle homeomorphism with $\rho(f) = p/q$. Then we say that $f$ has a parabolic periodic point $x_0$ if $(f^q)'(x_0) = 1.$ 

    Furthermore, we say that such a periodic point is quadratic if $(f^q)''(x_0)\neq0.$
\end{definition}
Here is the idea of the proof: since at rational rotation numbers $\rho$ will exhibit mode locking, i.e., the existence of an interval in the parameter space on which $\rho$ is constant, we can consider the parameter $t_0\in (0,1)$ that is at the upper edge of the mode locking plateau, i.e., $t_0 = \sup\rho\inv(p/q).$ Then we are done if we can show that $\rho(t_0+\varepsilon)-\rho(t_0)\geq c\varepsilon^{1/2}.$

To this end, let us consider $f_{t_0}^q.$ Note that $f_{t_0}^q-x$ must have zeros, i.e, the graph of $f_{t_0}^q$ must have tangency with identity. Now let us assume for a moment that there are finitely many such tangencies, and they are all quadratic parabolic. If we zoom in on one of the tangencies and consider the graph of $f_{t_0+\varepsilon}^q,$ we see that it will look like the graph of $f_{t_0}^q,$ but shifted upward by approximately $\varepsilon,$ as shown in Figure~\ref{fig:quadratic-bottleneck}. And so there is a gap between $f_{t_0+\varepsilon}^q$ and identity: all fixed points of $f_{t_0}^q$ are destroyed and therefore the rotation number strictly increases. To estimate this increase, it is enough to estimate instead the number of steps for a point to travel once around the circle, for the increase in $\rho$ is inversely proportional to this quantity. But notice that the point will only slow down at the ``bottlenecks" near the fixed points of $f_{t_0}^q,$ and so total number of steps needed to cover the whole circle is on the order of the number of steps needed to clear one of the bottlenecks. 

\begin{figure}[htbp]
\centering

\resizebox{0.6\textwidth}{!}{%
\begin{tikzpicture}[x=0.95cm,y=0.95cm,>=Latex,thick,scale=0.9,transform shape]

\begin{scope}

  \def\xzero{2.3}
  \def\eps{0.3}
  \def\gam{0.18}

  \draw[->] (0,0) -- (7.0,0) node[right] {$x$};
  \draw[->] (0,0) -- (0,7.0) node[above] {$y$};
  \node[below left] at (0,0) {$0$};

  \fill[black!6] (\xzero-0.6,0.02) rectangle (\xzero+0.6,3.3);

  \draw[dashed] (0,0) -- (5.20,5.20) node[above right] {$y=x$};

  \draw[black,very thick,domain=0.05:4.9,samples=200]
    plot (\x, {\x + \eps + \gam*(\x-\xzero)^2});

  \pgfmathsetmacro{\xcur}{1.0}

  \fill[black] (\xcur,\xcur) circle (2.2pt);
  \node[black,below right] at (\xcur,\xcur) {$x$};

  \foreach \k in {0,...,7}{
    \pgfmathsetmacro{\ynext}
      {\xcur + \eps + \gam*(\xcur-\xzero)*(\xcur-\xzero)}

    \draw[
      black,
      densely dotted,
      line width=1.1pt,
      -{Latex[length=1.1mm,width=0.8mm]}
    ]
      (\xcur,\xcur) -- (\xcur,\ynext);

    \draw[
      black,
      densely dotted,
      line width=1.1pt,
      -{Latex[length=1.1mm,width=0.8mm]}
    ]
      (\xcur,\ynext) -- (\ynext,\ynext);

    \fill[black] (\ynext,\ynext) circle (1.5pt);

    \xdef\xcur{\ynext}
  }

  \node[black,font=\small] at (3.6,6.8)
    {$f_{t_0+\varepsilon}^q(x)\approx x+\varepsilon+\Gamma(x-x_0)^2$};

  \coordinate (P) at (\xzero,{\xzero+\eps});
  \coordinate (Q) at (\xzero,\xzero);

  \fill[black] (P) circle (2.2pt);

  \draw[dashed] (\xzero,0) -- (P);
  \node[below] at (\xzero,0) {$x_0$};

  \draw[blue,line width=1.3pt] (Q) -- (P);
\node[blue,right] at ($(P)!0.9!(Q)$) {$\varepsilon$};

\end{scope}

\end{tikzpicture}
}

\caption{Passage through a quadratic bottleneck (grey region) under iteration of 
$f_{t_0+\varepsilon}^q$.}
\label{fig:quadratic-bottleneck}
\end{figure}

Here is the heuristics behind the exponent $1/2$: since this bottleneck is quadratic, for any point $x$ near some fixed point $x_0$ of $f_{t_0}^q$ the time-one-shift vector field $\dot x = \varepsilon+\gamma(x-x_0)^2$ well approximates the movement of $x$ under the iterations of $f_{t_0+\varepsilon}^q.$ Therefore, simply by integrating, we see that the time needed for $x$ to traverse the region $[x_0-\delta,x_0+\delta]$ is
$$\int_{x_0-\delta}^{x_0+\delta}\frac{dx}{\varepsilon+\gamma(x-x_0)^2} = \frac{2}{\sqrt{\varepsilon\gamma}}\arctan \left(\delta\sqrt{\frac{\gamma}{\varepsilon}}\right) \asymp 1/\sqrt{\varepsilon}.$$
So the increase in $\rho$ must be at least $\sqrt{\varepsilon},$ hence $\rho(t)$ is at most $1/2-$Hölder.

These estimates will also be used in both section 3 and section 4, and will be made rigorous in the following lemma. Since the $C^{1+\alpha}$ case is an exact analogue to the $C^2$ case just described, we will prove a generalized version that works for them both. While direct comparison to a vector field yields the correct order of magnitude and is a good piece of intuition to keep in mind, we will not use it in the proof.

\begin{lemma}
\label{lem:bottleneck} 
    Let $\{f_t\}_{t\in [0,1]}\subseteq \Diff_+^1 (S^1)$ be a strictly monotone $C^1$ family. Let $\alpha\in (0,1]$. Suppose that there exists some $t_0\in (0,1)$ such that $\rho(f_{t_0}) = p/q$, and $f_{t_0}$ has only finitely many periodic points. Let $T$ be the unique lift of $f_{t_0}^q$ with fixed points.
    Suppose further that for each periodic point $x_k$ and any $x$ sufficiently close $x_k$,
    one has
    $$T(x)\geq x+c|x-x_k|^{1+\alpha}$$
    for some constant $c>0$. Then $\rho(t): = \rho(f_t)$ cannot be Hölder continuous for any exponent $\beta>\frac{\alpha}{1+\alpha}.$
\end{lemma}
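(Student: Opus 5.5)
The plan is to prove a one-sided lower bound at the right edge of the $p/q$ plateau:
$$\rho(t_0+\varepsilon)-\rho(t_0)\;\geq\; c'\,\varepsilon^{\alpha/(1+\alpha)}$$
for all small $\varepsilon>0$. This is incompatible with $\beta$-Hölder continuity for any $\beta>\frac{\alpha}{1+\alpha}$, since $\varepsilon^{\beta}/\varepsilon^{\alpha/(1+\alpha)}\to 0$.

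\textbf{Step 1: the perturbed return map is pushed up uniformly.} Let $F_t$ be the continuous monotone lift of the family. Write $T=F_{t_0}^q-p$, the lift with fixed points, and $T_\varepsilon=F_{t_0+\varepsilon}^q-p$. By strict monotonicity and compactness, $\partial_t F\geq m>0$. Put $s=t_0+\varepsilon$ and $t=t_0$. Inductively,
$$F_s^j(x)=F_s(F_s^{j-1}x)\geq F_s(F_t^{j-1}x)\geq F_t(F_t^{j-1}x)+m\varepsilon,$$
where the first inequality uses that $F_s$ is increasing and the induction hypothesis. Hence $T_\varepsilon(x)\geq T(x)+m\varepsilon$ for all $x$.

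The hypothesis $T(x)\geq x+c|x-x_k|^{1+\alpha}$ forces $T\geq \mathrm{id}$ near every fixed point. Since $T-\mathrm{id}$ vanishes only at the finitely many $x_k$ (mod $1$), it never changes sign, so $T\geq\mathrm{id}$ everywhere. Choose $\delta>0$ so that the hypothesis holds on each $[x_k-\delta,x_k+\delta]$. Then there is $\eta>0$ with $T(x)-x\geq\eta$ off these neighborhoods. Consequently, with $\varepsilon'=m\varepsilon$,
$$T_\varepsilon(x)\geq x+\varepsilon'+c|x-x_k|^{1+\alpha}\ \text{near }x_k,\qquad T_\varepsilon(x)\geq x+\eta\ \text{elsewhere}.$$

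\textbf{Step 2: count the steps for one lap.} Fix $x$ and follow the orbit $x_n=T_\varepsilon^n(x)$, which is strictly increasing. I bound the number $N$ of iterates needed to reach $x+1$ with a counting principle: if an interval $I$ of length $L$ is such that every point of $I$ is displaced by at least $d$, then at most $L/d+1$ orbit points lie in $I$. This holds regardless of how the orbit jumps between intervals.

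Around each $x_k$, set $r=\varepsilon'^{1/(1+\alpha)}$ and partition $[x_k-\delta,x_k+\delta]$ as follows.
\begin{itemize}
\item The core $|x-x_k|\leq r$ has displacement $\geq\varepsilon'$, so it contains at most $2r/\varepsilon'+1=2\varepsilon'^{-\alpha/(1+\alpha)}+1$ orbit points.
\item The dyadic shells $2^jr\leq|x-x_k|\leq 2^{j+1}r$, with $j$ running up to $\log_2(\delta/r)$, have displacement $\geq c(2^jr)^{1+\alpha}$. Each therefore contains at most $C2^{-j\alpha}r^{-\alpha}+2$ orbit points.
\end{itemize}
Summing the geometric series gives $C'r^{-\alpha}+O(\log(1/\varepsilon))=O(\varepsilon^{-\alpha/(1+\alpha)})$ per fixed point. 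Outside the neighborhoods there are at most $1/\eta+(\#\text{fixed points})$ orbit points. Hence $N\leq K\varepsilon^{-\alpha/(1+\alpha)}$, with $K$ independent of $x$ and $\varepsilon$.

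\textbf{Step 3: conclude.} Since $T_\varepsilon^N(x)\geq x+1$ for all $x$, we get $\rho(T_\varepsilon)\geq 1/N$. Also $\rho(T_\varepsilon)=q\rho(F_{t_0+\varepsilon})-p$. Therefore
$$\rho(t_0+\varepsilon)-\frac pq\;\geq\;\frac{1}{qN}\;\geq\;\frac{1}{qK}\,\varepsilon^{\alpha/(1+\alpha)},$$
which gives the claimed bound. Rotation numbers are taken here as real numbers via the lift, and taking the fractional part does not matter for small $\varepsilon$.

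The main obstacle is Step 2: making the counting rigorous when the orbit can skip over shells. The counting principle handles this, because it bounds orbit points per interval rather than tracking transitions. One must also check that the logarithmic number of shells only contributes a lower-order term, which it does since $\log(1/\varepsilon)=o(\varepsilon^{-\alpha/(1+\alpha)})$.
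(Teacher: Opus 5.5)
Your proof is correct and follows the same route as the paper. The shared steps are the uniform push-up $T_\varepsilon\ge T+m\varepsilon$ from strict monotonicity, a core of radius $\varepsilon^{1/(1+\alpha)}$ around each fixed point plus dyadic shells whose step counts sum geometrically to $O(\varepsilon^{-\alpha/(1+\alpha)})$, and $O(1)$ steps away from the bottlenecks. Your extra details — the induction giving the push-up for the $q$-th iterate, the sign argument that $T\ge\mathrm{id}$ globally, and the orbit-points-per-interval counting principle — make rigorous steps the paper leaves implicit.
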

\begin{proof}
    We know $T(x)\geq x+c|x-x_0|^{1+\alpha}$ for all $x$ near $x_0,$ where $x_0$ is a fixed point of $T.$ From the definition of the rotation number we can see that that if we fix $x\in \R$ and let $N(F,x)$ be the least integer satisfying $F^{N(F,x)}(x)\geq x+1,$ then
    $\rho(f) = O(1/N(F,x))$ and $1/N(F,x) = O(\rho(f)).$ 
    So it suffices to estimate the number of steps it takes for $x$ to make it around the circle under the action of $f_{t_0+\varepsilon}.$ 
    
  Observe that we can make further reductions: since our family is $C^1$ and strictly monotone, we have $0<c<\frac{\delta}{\delta t}F_t^q(x)$ for some constant $c$, and so $F_{t_0}^q(x)+c\varepsilon\leq F_{t_0+\varepsilon}^q(x)$ everywhere. This means $N(F_{t_0}^q+c\varepsilon,x)\geq N(F_{t_0+\varepsilon}^q,x).$ So if we can show that $N(F_{t_0}^q+c\varepsilon,x) =  O(\varepsilon^{-\alpha/(1+\alpha)}),$ we would have $\rho(F_{t_0+\varepsilon}^q)\geq \rho(F_{t_0}^q+c\varepsilon) = C\varepsilon^{\alpha/(1+\alpha)}$ and $\rho(f_{t_0+\varepsilon})-\rho(f_{t_0}) \geq C\varepsilon^{\alpha/(1+\alpha)}$ for some constant $C>0,$ as desired. Without loss of generality we can assume $c = 1.$
    
    Let us fix some sufficiently small constant $\delta>0,$ and suppose without loss of generality that $x_0 = 0$. Now we estimate the number of steps it will take for some point $x$ to cross $[-\delta,\delta]$ under the iterations of $G: = T+\varepsilon.$ 

    To this end, let us write $r_\varepsilon: = \varepsilon^{1/(1+\alpha)},$ and divide the interval $[-\delta,\delta]$ into three regions: $[-\delta,-r_\varepsilon]\cup[-r_\varepsilon,r_\varepsilon]\cup[r_\varepsilon,\delta].$ If $|x|\leq r_\varepsilon,$ then we can use the fact that $G(x)-x\geq \varepsilon$ to conclude that the number of steps needed to cover the middle region is at most $O(\varepsilon^{1/(1+\alpha)-1}) = O(\varepsilon^{-\alpha/(1+\alpha)}).$

    For $x\in [r_\varepsilon,\delta],$ let us divide this interval into sections of the form 
    $$\{[2^kr_\varepsilon,2^{k+1}r_\varepsilon]\}_{k = 0}^K,$$
    where $K\in \N$ satisfies $2^Kr_\varepsilon\leq \delta<2^{K+1}r_\varepsilon.$ If $x\in [2^kr_\varepsilon,2^{k+1}r_\varepsilon],$ then by assumption the minimum step size is
    $$G(x)-x\geq |x|^{1+\alpha}\geq (2^kr_\varepsilon)^{1+\alpha}.$$
    And so the number of steps needed to cover this section is at most 
    $$\frac{2^kr_\varepsilon}{(2^kr_\varepsilon)^{1+\alpha}}+1 = (2^kr_\varepsilon)^{-\alpha}+1.$$
    So the total number of steps needed to cover the whole interval $[r_\varepsilon,\delta]$ is at most
    $$\sum_{k = 0}^K2^{-\alpha k}r_\varepsilon^{-\alpha}+(K+1)\leq O(r_\varepsilon^{-\alpha}) = O(\varepsilon^{-\alpha/(1+\alpha)}),$$
    since $K$ is only of order $\log\varepsilon\inv.$

    For $x\in [-\delta,-r_\varepsilon],$ we again divide the interval into sections of the form $$\{[-2^{k+1}r_\varepsilon,-2^{k}r_\varepsilon]\}_{k = 0}^K,$$ 
    and again the minimum step size for $x\in [-2^{k+1}r_\varepsilon,-2^{k}r_\varepsilon]$ is
    $$G(x)-x\geq |x|^{1+\alpha}\geq (2^kr_\varepsilon)^{1+\alpha},$$
    and so going through the calculations as above will produce the same estimate. 

    Finally, since there are finitely many such bottlenecks and it takes only $O(1)$ many steps for $g$ to clear the regions outside of the bottlenecks, we have, in total, $N(g,x) = O(\varepsilon^{-\alpha/(1+\alpha)})$ for any $x\in S^1.$ This completes the proof.
    
\end{proof}

Now we see that as long as our family $\{f_t\}_t$ is ``good'' in the sense that it contains some $f_{t_0}$ with only quadratic parabolic periodic points, then the rotation number of this family can be no better than $1/2-$Hölder. So in order to prove our target theorem, all it remains to show is that those diffeomorphisms $f\in \Diff_+^2(S^1)$ for which $\{R_t\circ f\}_t$ is a ``good'' family, is open and dense:
\begin{lemma}
\label{lem:R_tf: openess of quadratic tangency} 
    If for some $f\in \Diff_+^2(S^1)$ and some $t\in (0,1)$ $f_t$ has periodic points which are all parabolic and quadratic, then there are only finitely many of them. Further more, any $g\in \Diff_+^2(S^1)$ sufficiently $C^2$-close to $f$ will have some $t'\in (0,1)$ such that $g_{t'}$ has periodic points which are all parabolic and quadratic.
\end{lemma}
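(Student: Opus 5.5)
The plan has two parts: finiteness, which is a compactness argument, and openness, which is a perturbation argument. Throughout, $f_t = R_t\circ f$, $\rho(f_t) = p/q$, and $T_t$ is the lift of $f_t^q$ that has fixed points.

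\textbf{Finiteness.} Suppose every periodic point $x_k$ of $f_t$ satisfies $T_t(x_k)=x_k$, $T_t'(x_k)=1$ and $T_t''(x_k)\neq 0$. Consider $h(x) = T_t(x)-x$. Each $x_k$ is a zero of $h$ with $h'(x_k)=0$ and $h''(x_k)\neq 0$. By Taylor's theorem with a $C^2$ remainder, $h(x) = \tfrac12 h''(x_k)(x-x_k)^2 + o(|x-x_k|^2)$, so $x_k$ is an isolated zero of $h$. If there were infinitely many periodic points, compactness of $S^1$ would give an accumulation point $x_\ast$. Continuity gives $h(x_\ast)=0$, so $x_\ast$ is itself periodic, hence quadratic parabolic, hence isolated. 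This contradicts its being an accumulation point, so there are only finitely many periodic points.

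\textbf{Openness.} Let $t_0$ be the given parameter. Every zero of $h$ is a double zero of fixed sign $\operatorname{sgn} h''(x_k)$. Since $h$ does not change sign across a double zero and $h$ has finitely many zeros, $h$ has constant sign on $S^1$, say $h\ge 0$, so $h''(x_k)>0$ for all $k$. (If $h\le 0$, the argument below is symmetric, using the lower end of the plateau.) Hence $t_0$ is the upper endpoint $\sup\rho^{-1}(p/q)$ of the mode-locking plateau: for $t>t_0$ we have $T_t > T_{t_0}$ strictly, because $\partial_t F_t^q>0$ for a rotation family, so $T_t$ has no fixed points.

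Choose disjoint neighborhoods $U_k\ni x_k$ on which $T_{t_0}''-0 \ge c>0$. Outside $\bigcup U_k$ we have $h\ge m>0$. For $g$ that is $C^2$-close to $f$ and $t$ near $t_0$, the lift $G_t^q$ of $g_t^q=(R_t\circ g)^q$ is $C^2$-close to $T_{t_0}$ uniformly. This is the step where $C^2$-closeness of $g$ to $f$ propagates through $q$ compositions, with $q$ fixed. Consequently:
\begin{itemize}
\item outside $\bigcup U_k$, $G_t^q(x)-x\ge m/2$;
\item inside each $U_k$, $(G_t^q)''\ge c/2$, so $G_t^q - x$ is strictly convex on $U_k$.
\end{itemize}
Define $t'=\sup\{t: \rho(g_t)\le p/q\}$. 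Because $g_{t_0}$ is close to $f_{t_0}$ and $t\mapsto G^q_t$ moves strictly upward at unit-order speed, $t'$ is close to $t_0$ and lies in $(0,1)$. At $t'$ we have $\rho(g_{t'})=p/q$ by continuity of $\rho$, so the lift $G_{t'}^q$ of $g_{t'}^q$ has fixed points and satisfies $G_{t'}^q-x\ge 0$ everywhere. If it were negative somewhere, increasing $t$ slightly would keep a fixed point, contradicting maximality of $t'$.

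The fixed points of $G_{t'}^q$ lie in $\bigcup U_k$. Inside a $U_k$, a nonnegative strictly convex function vanishes at most at one point, which is its minimum, where the derivative is $0$ and the second derivative is $\ge c/2$. So every periodic point of $g_{t'}$ is parabolic and quadratic.

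The main obstacle I expect is justifying uniform $C^2$ control of $G_t^q$ jointly in $(g,t)$ and locating $t'$ inside a small window around $t_0$, so that the neighborhoods $U_k$ and the bounds $m$, $c$ can be fixed independently of $g$. I would handle this by compactness of a small parameter interval $[t_0-\eta,t_0+\eta]$. On it, $T_{t_0-\eta}$ has a strictly negative value somewhere and $T_{t_0+\eta}>x$ everywhere, and both of these properties persist under small $C^0$ perturbation of $g$. This forces $t'\in(t_0-\eta,t_0+\eta)$.
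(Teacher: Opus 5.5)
Your proposal is correct and follows essentially the same route as the paper. Finiteness comes from an accumulation point of fixed points contradicting quadraticity, and openness from taking $t'$ at the edge of the perturbed family's $p/q$ plateau inside a small parameter window around $t$, then using $C^2$-closeness of $g_{t'}^q$ to $f_t^q$ to keep the second derivative bounded away from zero at the fixed points. Your version is also more careful than the paper's on two points it takes for granted: the constant-sign argument for $T_t-x$ shows why $t$ is an endpoint of the plateau, and the nonnegative strictly convex function on each $U_k$ shows why the fixed points of $g_{t'}^q$ are parabolic and stay near the $x_k$.
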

\begin{proof}
    Take $f\in \Diff_+^2(S^1)$ and $t\in (0,1)$ such that $\rho(t) := \rho(f_t) = p/q$ and $f_t$ has only parabolic and quadratic periodic points. First notice that the number of periodic points of $f_t$ must be finite. Indeed, if not, then there is a sequence $(x_n)_n$ of fixed points of $f_t^q$ that converge to some $x_0\in S^1.$ Then $x_0$ is a fixed point of $f_t^q$ with $(f_t^q)'(x_0) = 1$ by continuity. However, 
    $$(f_t^q)''(x_0) = \lim_n\frac{(f_t^q)'(x_n)-(f_t^q)'(x_0)}{x_n-x_0} = 0,$$
    contradicting our assumption that every periodic point of $f_t$ is parabolic and quadratic. 
    
    Since the mode locking interval $I_f(p/q): = \rho\inv(p/q)$ is non-degenerate, we can take $g\in \Diff_+^2(S^1)$ that is $C^2-$close to $f$ and a small $\delta>0$ such that $t-\delta\in int(I_f(p/q)),$ and
    $$\rho(g_{t-\delta}) = \rho(f_{t-\delta}) = p/q.$$
    Since $\rho(f_{t+\delta})>\rho(f_t)$ for any $\delta>0,$ we can ensure that $$\rho(g_{t+\delta})>\rho(g_t) = p/q$$ provided $g$ is sufficiently close to $f.$ 
    Therefore, there exists $t'\in [t-\delta,t+\delta]$ such that $\rho(g_{t'}) = p/q$ and $g_{t'}$ has only parabolic periodic orbits. We just need to show that these are also quadratic. 

    By assumption, if $x\in S^1$ is a fixed point of $f_t^q$, then $|(f_t^q)''(y)|>0$ for any $y$ in a small neighborhood of $x.$ Since $g_{t'}^q$ and $f_t^q$ are $C^2-$close, all fixed points of $g_{t'}^q$ will be sufficiently close to those of $f_t^q,$ and so $|(g_{t'}^q)''|$ will be also bounded away from 0 at its fixed points. This completes the proof.
\end{proof}

    The above lemma shows that the collection of $f\in \Diff_+^2(S^1)$ such that $\{f_t\}_t$ make a ``good'' family, is open, and the following lemma tells us that it is also dense.

\begin{lemma}
\label{lem:R_tf: denseness of quadratic tangency} 
    If $f\in \Diff_+^2(S^1)$ has fixed points which are all parabolic, then there is some $h\in \Diff_+^2(S^1)$ arbitrarily $C^2-$close to $f$ such that $h$ has fixed points which are all parabolic and quadratic.
\end{lemma}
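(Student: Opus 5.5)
The idea is to work with the displacement function. Let $F$ be the lift of $f$ that has fixed points, and set $\varphi(x) := F(x)-x$. This is a $1$-periodic $C^2$ function, its zeros are exactly the fixed points, and at each of them $\varphi'=0$ because they are parabolic. The plan is to replace $\varphi$ by a $C^2$-close periodic \emph{Morse} function $\psi$, shift it vertically so that its minimum value is $0$, and take $h$ to be the map with lift $H := \mathrm{id}+\psi-\min\psi$. The fixed points of $h$ are then exactly the global minima of $\psi$. At each of them $\psi'=0$, so the fixed point is parabolic, and $\psi''>0$ by nondegeneracy, so it is quadratic. There are finitely many, by compactness and nondegeneracy (or by Lemma~\ref{lem:R_tf: openess of quadratic tangency}).

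\textbf{Step 1 (sign of $\varphi$).} I will use that $\varphi$ does not change sign; say $\varphi\ge 0$, so that $\min\varphi=0$. This is exactly the situation in which the lemma is applied: $f=f_{t_0}$ with $t_0=\sup\rho^{-1}(p/q)$ (after passing to $f^q$), and the graph of $F$ only touches the diagonal from above. The hypothesis is needed in some form. If $\varphi$ changed sign at a degenerate parabolic point, for instance one of cubic type, then every $C^0$-small perturbation would still have a sign-changing zero of the displacement. Such a zero cannot be quadratic parabolic, since a quadratic parabolic point forces $\psi\ge 0$ nearby. So I will make this assumption explicit, interpreting ``all fixed points parabolic'' as the tangency-from-one-side situation arising at the edge of a plateau.

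\textbf{Step 2 (Morse approximation).} First mollify $\varphi$ to obtain a $C^\infty$ periodic function that is $C^2$-close to it. Then apply the density of Morse functions on $S^1$ in the $C^2$ topology; concretely, add $a\cos(2\pi x)+b\sin(2\pi x)$ with $(a,b)$ chosen small and generic, using Sard's theorem. The result is a periodic $\psi$ with $\|\psi-\varphi\|_{C^2}<\eta$ all of whose critical points are nondegenerate.

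\textbf{Step 3 (shift and closeness).} Since $\min\varphi=0$, we get $|\min\psi|<\eta$, so $\|H-F\|_{C^2}\le \|\psi-\varphi\|_{C^2}+|\min\psi|<2\eta$. Moreover $H'=1+\psi'$ is $\eta$-close to $F'>0$, so $h$ is an orientation-preserving $C^2$ diffeomorphism once $\eta<\min F'$. This completes the construction.

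The main obstacle is really the hypothesis check in Step 1, namely justifying that $\varphi$ has constant sign. The analytic parts, Steps 2 and 3, are standard genericity and bookkeeping.
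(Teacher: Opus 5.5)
Your proof is correct, but it takes a different route from the paper's. The paper composes rather than approximates. It picks a fixed point $x_0$ of $f$ with $f''(x_0)=0$. It then takes a $C^2$-small $g$ with $g''(x_0)\neq 0$, whose lift satisfies $G\ge \mathrm{id}$ with equality only at lifts of $x_0$ (for instance $G(x)=x+\epsilon(1-\cos 2\pi(x-x_0))$), and sets $h=g\circ f$. Then $G\circ F\ge F\ge \mathrm{id}$ with equality only at lifts of $x_0$, so $x_0$ is the \emph{unique} fixed point. Moreover $h''(x_0)=g''(x_0)f'(x_0)^2+g'(x_0)f''(x_0)=g''(x_0)\neq 0$.

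That argument is explicit and needs neither Sard nor the density of Morse functions. It also yields exactly one fixed point at a prescribed location. The same composition trick is reused in Sections 3 and 4, where having a single periodic orbit is what lets the bumps $\varepsilon\phi$ and $\varepsilon|\phi|^{1+\alpha}$ be placed on it. Your Morse-approximation route is softer and needs no information about where the fixed points of $f$ lie. However, it only produces finitely many quadratic fixed points, namely the global minima of $\psi$. These may sit at new locations and there may be more than one; a further generic perturbation making critical values distinct would give uniqueness if it were needed.

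On Step 1, the paper makes the same assumption, writing ``without loss of generality $F(x)\ge x$''. You are right that this is a genuine hypothesis and not a normalization. For $F(x)=x+\epsilon\sin^3(2\pi x)$ every fixed point is parabolic, yet every $C^0$-close lift still has displacement taking both signs. That is impossible if all fixed points are isolated strict local extrema of the displacement, which quadratic parabolic fixed points are. The hypothesis does hold where the lemma is actually applied, at $t_0=\sup\rho^{-1}(0)$.
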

\begin{proof}
    Suppose that $f$ has at least one parabolic fixed point $x_0\in S^1$ that is non-quadratic, i.e., $f''(x_0) = 0.$ Let $F$ be the unique lift of $f$ with fixed points. Without loss of generality $F(x)\geq x$ for all $x\in\R$. Now take any $g\in \Diff_+^2(S^1)$ such that:
    \begin{enumerate}
        \item $x_0$ is the only fixed point of $g$;
        \item $x_0$ is a parabolic quadratic fixed point of $g$;
        \item for the lift $G$ of $g$ with fixed point $x_0,$ $G(x)>x$ for any $x\in \R$ that is not a lift of $x_0.$
        \item $\|G(x)-x\|_{C^2}$ is sufficiently small. 
    \end{enumerate}
    Now consider $h: = g\circ f$, which can be made arbitrarily $C^2$ close to $f$. In this case, $x_0$ is the only fixed point of $h$. Finally, we can verify by a straightforward calculation that 
    $$h(x_0) = x_0,\;h'(x_0) = 1,\;h''(x_0) = g''(x_0)\neq 0,$$
    as desired. 
\end{proof}
Now putting the pieces together finishes the proof:
\begin{lemma}
\label{lem:R_tf: open denseness of quadratic tangency} 
    Let $E\subseteq \Diff_+^2(S^1)$ be the set of all $f\in \Diff_+^2(S^1)$ for which there exists some $t\in [0,1]$ such that $f_t$ has rational rotation number and only parabolic quadratic periodic orbits. Then $E\subseteq \Diff_+^2(S^1)$ is open and dense in the $C^2$ topology.
\end{lemma}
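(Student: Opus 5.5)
Openness will come directly from Lemma~\ref{lem:R_tf: openess of quadratic tangency}. If $f\in E$, there is a parameter $t$ at which $f_t=R_t\circ f$ has rational rotation number $p/q$ and only parabolic quadratic periodic orbits. That lemma then gives, for every $g$ sufficiently $C^2$-close to $f$, a parameter $t'$ at which $g_{t'}$ has the same property. Hence a $C^2$-neighbourhood of $f$ lies in $E$. One small point needs care: the lemma uses that $t-\delta$ lies in the interior of the plateau $\rho^{-1}(p/q)$. So I would take $t$ to be the right endpoint $\sup\rho^{-1}(p/q)$, since any $t$ with only parabolic periodic points must be an endpoint of the plateau (hyperbolic or transversal fixed points would persist to the right). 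I would also note that the plateau is non-degenerate for rational $p/q$ when the fixed points are parabolic and isolated, by the standard mode-locking argument.

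For density, start from an arbitrary $f\in\Diff_+^2(S^1)$. Because $t\mapsto\rho(R_t\circ f)$ is continuous and increases by $1$ over $[0,1]$, it takes some rational value $p/q$ on a nondegenerate plateau with right endpoint $t_0\in(0,1)$. (If needed, shift slightly to ensure $t_0$ is interior; replacing $f$ by $R_s\circ f$ for tiny $s$ is harmless.) At $t_0=\sup\rho^{-1}(p/q)$, the lift $T$ of $f_{t_0}^q$ with fixed points satisfies $T\ge x$. Otherwise a point with $T(x)<x$ would survive a small increase of $t$ and keep $\rho=p/q$, contradicting maximality. So every periodic point of $f_{t_0}$ is parabolic.

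I now want to perturb so that the periodic points are additionally quadratic. Here Lemma~\ref{lem:R_tf: denseness of quadratic tangency} is stated for fixed points of a single map, so I would adapt its construction to period $q$. Pick one periodic point $x_0$ of $f_{t_0}$ and choose $g$ that is $C^2$-close to the identity, has $x_0$ as its unique fixed point, is quadratic there, and has lift $G(x)>x$ off lifts of $x_0$. Set $\tilde f:=R_{t_0}^{-1}\circ g\circ R_{t_0}\circ f$, so that $\tilde f_{t_0}=g\circ f_{t_0}$. Then $\tilde f_{t_0}^q\ge f_{t_0}^q$, with strict inequality on every orbit avoiding $x_0$. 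Consequently the only periodic orbit of $\tilde f_{t_0}$ is that of $x_0$, which remains fixed by $\tilde f_{t_0}^q$, and the rotation number is still $p/q$. Differentiating the composition along the orbit shows $(\tilde f_{t_0}^q)'(x_0)=1$ and $(\tilde f_{t_0}^q)''(x_0)=(f_{t_0}^q)''(x_0)+c\,g''(x_0)$ for some $c>0$ coming from the derivatives along the orbit. This is nonzero because $(f_{t_0}^q)''(x_0)=0$ when $T\ge x$ has a minimum of $T-x$ there with vanishing second derivative. If $(f_{t_0}^q)''(x_0)\neq0$ already, I would simply keep it and kill the other periodic points, for which the strict inequality above suffices. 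Thus $\tilde f\in E$ and $\|\tilde f-f\|_{C^2}$ is small.

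The main obstacle is the bookkeeping in this density step. I must check that composing with $g$ inserts exactly one tangency along the whole orbit of $x_0$, and no extra ones, while all other parabolic orbits of $f_{t_0}$ are pushed strictly above the diagonal. I must also make sure the second-derivative computation for $q$ iterates gives a nonvanishing term. I would handle the latter by computing $(g\circ f_{t_0})^q$ near $x_0$ to second order using $g(x)=x+\gamma(x-x_0)^2+o((x-x_0)^2)$.
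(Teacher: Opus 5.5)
\textbf{The density step fails for $q\ge 2$.} Your openness argument is the paper's, but the density argument has a genuine gap whenever $q\ge 2$ (for $q=1$ it is fine and coincides with the paper's). You take $g$ with $x_0$ as its \emph{only} fixed point and $G>\mathrm{id}$ elsewhere. You then claim the orbit of $x_0$ remains a periodic orbit of $\tilde f_{t_0}=g\circ f_{t_0}$. It does not, because $g$ is applied at every step of the orbit. At $x_1=f_{t_0}(x_0),\dots,x_{q-1}$ it pushes strictly forward.

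To see this precisely, put $y_k=(G\circ F_{t_0})^k(x)$ and $z_k=F_{t_0}^k(x)$. Then $y_{k+1}=G(F_{t_0}(y_k))\ge F_{t_0}(y_k)\ge z_{k+1}$. Equality $\tilde T(x)=T(x)$ forces equality at every step, i.e.\ $f_{t_0}^{j}(x)=x_0$ for all $j=1,\dots,q$. That gives $f_{t_0}(x_0)=x_0$, which is impossible for $q\ge 2$. Hence $\tilde T>T\ge\mathrm{id}$ everywhere. So $\tilde f_{t_0}$ has no periodic points at all, and $\rho(\tilde f_{t_0})>p/q$. Your formula $(f_{t_0}^q)''(x_0)+c\,g''(x_0)$ is then evaluated at a point that is no longer fixed, and nothing shows $\tilde f\in E$. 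Your ``main obstacle'' paragraph points to exactly this check, and with your $g$ the check fails.

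\textbf{How to repair it.} One fix is to choose $g$ whose fixed-point set is exactly the orbit $\{f_{t_0}^k(x_0)\}_{k=0}^{q-1}$, with each fixed point quadratic and $G\ge\mathrm{id}$; the paper uses this kind of $h$ in Lemma~\ref{lem:denseness of tangency(general)}. Then that orbit survives, every other orbit is pushed strictly off the diagonal, and $(\tilde f_{t_0}^q)''(x_0)$ gains a positive combination of the $g''(x_i)$. The simpler fix, which is what the paper does, avoids period $q$ altogether. Since $\rho(R_t\circ f)$ increases by a full unit on $[0,1]$, it attains an integer value. At $t_0=\sup\rho^{-1}(0)$ the map $f_{t_0}$ has only parabolic \emph{fixed} points, so Lemma~\ref{lem:R_tf: denseness of quadratic tangency} applies verbatim. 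Your conjugation $R_{t_0}^{-1}\circ g\circ R_{t_0}\circ f$ is the right way to transfer that perturbation back to $f$.

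\textbf{A minor point on openness.} A parameter whose periodic points are all quadratic parabolic can be the \emph{left} endpoint of its plateau (when $T\le\mathrm{id}$). In that case you cannot pass to $\sup\rho^{-1}(p/q)$, and you must run the mirror-image argument instead.
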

\begin{proof}
    $E$ is open by Lemma~\ref{lem:R_tf: openess of quadratic tangency}. We now show it is dense. First notice that for any $f\in \Diff_+^2(S^1)$, the family $\{R_t\circ f\}_{t\in [0,1]}$ will have some $f_{t_0}$ with only parabolic periodic points. Indeed, since $\rho(f_1) = \rho(f_0)+1$, there must be some $t\in [0,1)$ such that $\rho(f_{t}) = 0$. Then for $t_0: = \sup\rho\inv(0)$, $f_{t_0}$ must have only parabolic periodic points.

    Now if $f\in E^c,$ then by Lemma~\ref{lem:R_tf: denseness of quadratic tangency} there is some $h\in \Diff_+^2(S^1)$ arbitrarily close to $f,$ such that $h_{t_0}$ has fixed points that are all parabolic and quadratic. This means $h\in E,$ so $E$ is dense. 
\end{proof}

    Combining Theorem~\ref{thm:Holder 1/2}, Lemma~\ref{lem:bottleneck}, and Lemma~\ref{lem:R_tf: open denseness of quadratic tangency}, we obtain a proof of Theorem~\ref{thm:R_t f 1/2-Holder}.

\section{Generic Optimality of $1/2-$Hölder for General $C^2$ families}

This section will be dedicated to the proof of Theorem~\ref{thm:generic C^2 1/2-Holder}: we will consider the space $\Lambda_2$ of all monotone $C^2$ families $\{f_t\}_t:[0,1]\to \Diff_+^2(S^1)$ with $\rho(t_1)>\rho(t_0).$ As a reminder, this means that for any fixed continuous lift of the family $\{F_t\}_t,$ the function $F(t,x): = F_t(x)$ is $C^2,$ and $\partial_t F(t,x)\geq 0$.
We also equip this space with the metric $d_2(\{f_t\}_t,\{g_t\}_t): = \|F(t,x)-G(t,x)\|_{C^2}$.
Under these assumptions, we construct an open and dense subset $\mathcal E\subseteq \Lambda_2$ such that for all $\{f_t\}_t\in \mathcal E,$ then rotation number of the family $\rho(t): = \rho(f_t)$ is exactly $1/2-$Hölder.

Just as in section 2, Graczyk's Theorem from \cite{graczyk1991harmonic} and Lemma~\ref{lem:bottleneck} tells us that to prove the desired result, it is enough to show that the collection of ``good'' families, i.e., families $\{f_t\}_t\in \Lambda_2$ that contain some $f_{t_0}$ with only quadratic parabolic periodic points, is open and dense in $\Lambda_2.$ The next lemma show that the ``good'' families are dense.

\begin{lemma}
\label{lem:denseness of tangency(general)}
consider the subset $\mathcal E\subseteq \Lambda_2$ consisting of those families $\{f_t\}_t$ for which there is some $t_0\in (0,1)$ such that $f_{t_0}$ has periodic points that are all quadratic and parabolic. Then $\mathcal E$ is dense in $\Lambda_2.$ 
\end{lemma}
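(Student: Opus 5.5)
Given $\{f_t\}_t\in\Lambda_2$ and $\eta>0$, we will produce a family in $\mathcal E$ within $d_2$-distance $\eta$, adapting the argument of Lemma~\ref{lem:R_tf: open denseness of quadratic tangency} to a general monotone family.

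\textbf{Step 1: choose a parameter with only parabolic periodic points.} Since $\rho(f_1)>\rho(f_0)$ and $\rho$ is continuous and monotone, we pick a rational $p/q\in(\rho(f_0),\rho(f_1))$ and set $t_0:=\sup\rho^{-1}(p/q)$, so that $t_0\in(0,1)$. Let $T$ be the lift of $f_{t_0}^q$ with fixed points, which is the lift from the statement of Lemma~\ref{lem:bottleneck}. For $t>t_0$ close to $t_0$ we have $\rho(f_t)>p/q$. Since the family is monotone, $T(x)\ge x$ for all $x$; otherwise some point would satisfy $T(x)<x$, and the fixed points would persist under small increases of $t$ (the graph of $F_t^q-\mathrm{id}$ still crosses zero), contradicting $t_0=\sup\rho^{-1}(p/q)$. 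So every periodic point of $f_{t_0}$ is a minimum of $T-\mathrm{id}$ with value $0$, hence parabolic.

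\textbf{Step 2: perturb at the single parameter $t_0$.} Choose one periodic point $x_0$ and work with $f_{t_0}^q$ rather than $f$ itself, which is the main difference from Lemma~\ref{lem:R_tf: denseness of quadratic tangency}. As there, take a circle diffeomorphism $g$, $C^2$-close to the identity, whose only fixed point is $x_0$, with that point quadratic ($g''(x_0)>0$) and with its fixed-point lift satisfying $G(x)>x$ away from lifts of $x_0$. Define the perturbed family $h_t:=g\circ f_t$. It is monotone and $C^2$, and it is $d_2$-close to $\{f_t\}$. It stays in $\Lambda_2$, since $\rho(h_1)>\rho(h_0)$ is preserved by continuity of $\rho$ under small perturbations; if needed, choose $p/q$ with strict room on both sides.

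\textbf{Step 3 (main obstacle): quadratic tangency for the composite.} The difficulty is that $h_{t_0}^q=(g\circ f_{t_0})^q$, not $g\circ f_{t_0}^q$. Also $g$ displaces the other points of the orbit of $x_0$, and the rotation number of $h_{t_0}$ need not be $p/q$. To handle this, we will build the perturbation so that it acts only near $x_0$. We take $g$ to equal the identity outside a small neighborhood $U$ of $x_0$ that is disjoint from the other $q-1$ points of the orbit. Near $x_0$ we take $g(x)=x+a(x-x_0)^2\phi(x)$, where $\phi$ is a bump function and $a$ is small. Then $g\ge \mathrm{id}$, so $h_t^q\ge f_t^q$, and the orbit of $x_0$ remains periodic for $h_{t_0}$, with
$$H_{t_0}^q(x)-x = T(x)-x+ a'(x-x_0)^2+o((x-x_0)^2)$$
near $x_0$, where $a'>0$ comes from the chain rule along the orbit. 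Hence $x_0$ becomes a quadratic parabolic fixed point of the new $T$. Every other fixed point $y$ of $T$ whose orbit meets $U$ is destroyed, because $g>\mathrm{id}$ there. Fixed points whose orbits avoid $U$ survive unchanged, and they may be non-quadratic. To deal with them, we will first use $\theta_t=\sup\{s: \rho(h_s)=p/q\}$, which may move slightly, and recheck. Alternatively, we replace the bump by a $g$ that is strictly greater than the identity on the complement of the orbit of $x_0$, exactly as in conditions 1--3 of Lemma~\ref{lem:R_tf: denseness of quadratic tangency}. Then $H_{t_0}^q(x)>x$ everywhere except at the orbit of $x_0$, so that orbit is the unique periodic orbit, and it is parabolic and quadratic. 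In this version $\rho(h_{t_0})=p/q$ still holds because a fixed point of $H^q_{t_0}$ remains. The main technical work is to verify the quadratic coefficient along the whole orbit, computing $(H^q)''(x_0)=\sum_j g''(\cdot)\cdot(\text{positive derivatives})>0$ when $g$ is quadratic at every orbit point. So we will require $g$ to have quadratic parabolic fixed points at each point of the orbit of $x_0$ and $g>\mathrm{id}$ elsewhere. Then $h\in\mathcal E$ with parameter $t_0$.
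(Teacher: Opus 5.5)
Your final construction is correct and is essentially the paper's argument: at the upper endpoint $t_0$ of a mode-locking interval, you post-compose the family with a $C^2$-small $g\ge \mathrm{id}$ whose fixed points are exactly the orbit $\{x_k\}$ of $x_0$, each with $g''(x_k)>0$; the paper does the same in two stages (first $h\circ f_t$ with $h$ fixing exactly the orbit, then an additive bump $F_t+\varepsilon\phi$ with $\phi''(x_k)=1$), using the same chain-rule computation of the second derivative of the $q$-th iterate along the orbit. Two small corrections: your expression for $(H^q_{t_0})''(x_0)$ must also contain the term $(f^q_{t_0})''(x_0)=T''(x_0)$, which is $\ge 0$ because your Step 1 shows $x_0$ is a minimum of $T-\mathrm{id}$; and the single-fixed-point $g$ of Step 2 and the $\theta_t$ detour in Step 3 should simply be dropped.
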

\begin{proof}
Let us first consider $\mathcal F\subseteq \Lambda_2$ consisting of those families $\{f_t\}_t$ for which there is some $t_0\in (0,1)$ such that $f_{t_0}$ have periodic points that are all parabolic. We will first show that Then $\mathcal F$ is dense in $\Lambda_2.$

    It suffices to show that for any $\{f_t\}\in \Lambda_2\setminus \mathcal F,$ there is some $\{g_t\}\in \mathcal F$ arbitrarily close to $\{f_t\}$ in norm. If $\{f_t\}_t$ is not strictly increasing, then we can consider the family $\{f_t\circ R_{\varepsilon t}\}_t$ for arbitrarily small $\varepsilon>0.$ So we may assume without loss of generality our family $\{f_t\}_t$ is strictly increasing. 
    
    Now consider any $f_{t_0}$ with $\rho(f_{t_0}) = p/q\in (\rho(f_0),\rho(f_1))$, such that $t_0$ is on the upper boundary of the mode locking interval at $p/q$ (if the mode locking is a single point, let $t_0$ be that point), and suppose without loss of generality that $0$ is a fixed point of $f_{t_0}^q.$ Take any $h\in \Diff_+^2(S^1)$ with $\{f_{t_0}^k(0)\}_{k = 0}^{q-1}$ as the only fixed points, such that there exists a lift $H$ with $\|H(x)-x\|_{C^2}$ sufficiently small, and $H(x)\geq x$. Define $g_t: = h\circ f_t.$ Then $\{g_t\}_t$ is a strictly increasing family with parabolic periodic  points $\{f_{t_0}^k(0)\}_{k = 0}^{q-1}$. Since $d_2(\{g_t\}-\{f_t\})$ can be made as small as we wish, $\mathcal F$ is dense in $\Lambda_2.$

    Now let us suppose $\{f_t\}_t\in \mathcal F,$ and find some $\{g_t\}_t\in \mathcal E$ arbitrarily close to $\{f_t\}_t.$ Let $t_0\in (0,1)$ be such that $\rho(f_{t_0}) = p/q$ where all periodic points of $f_{t_0}$ are parabolic. Let $T$ be the unique lift of $f_{t_0}^q$ with fixed points. Then by the specific construction above, we may assume that $T(x)\geq x,$ and all periodic points of $f_{t_0}$ are of the form $\{f_{t_0}^k(x_0)\}_{k=0}^{q-1} = \{x_k\}_{k=0}^{q-1}.$

    For each $k,$ fix an open set $U_k$ containing every lift of $x_k,$ such that the $U_k's$ are pairwise disjoint. Take any $\phi\in C^\infty(\R)$ such that:
    \begin{enumerate}
        \item $\phi$ vanishes outside of $\bigcup_k^{q-1}U_k$.
        \item $\phi(x) =\phi (x+1)$ for all $x\in \R.$
        \item $\phi(x)\geq 0.$
        \item $\max(\|\phi\|_\infty,\; \|\phi'\|_\infty,\;\|\phi''\|_\infty)\leq 1.$
        \item $\phi(x_k) = \phi'(x_k) = 0,$ and $\phi''(x_k) = 1$ for all $k$.
    \end{enumerate}
    Now consider the $C^2$ family $\{g_t\}_t$ given by the lifts $G_t = F_t+\varepsilon\phi$ for some small $\varepsilon>0.$ Then $d_2(\{f_t\}_t-\{g_t\}_t)$ can be made arbitrarily small.

    We verify that each $g_t$ is indeed a circle diffeomorphism: we have $$G_t(x+1) = F_t(x+1)+\varepsilon\phi(x+1) = F_t(x)+\varepsilon\phi(x)+1 = G_t(x)+1.$$ Also, $G_t'(x) = F_t'(x)+\varepsilon\phi'(x)\geq F_t'-\varepsilon>0$ as long as $\varepsilon<\min_{t,x}F_t'.$ And so $g_t$ must be a $C^2$ diffeomorphism.

    Now it remains to show that $\{g_t\}_t$ belongs to $\mathcal E.$ Indeed, if $\varepsilon$ is sufficiently small, $\{x_k\}_k^{q-1}$ will be the only periodic points of $g_{t_0}.$ And at these points, we have
    $$(g_{t_0}^q)'(x_k) =\prod_{i=0}^{q-1}g_{t_0}'(x_i) = \prod_{i=0}^{q-1}f_{t_0}'(x_i) = (f_{t_0}^q)'(x_k) = 1,$$
    and
    $$(g_{t_0}^q)''(x_k) = \sum_{i=0}^{q-1}\frac{g_{t_0}''(x_i)}{g_{t_0}'(x_i)}(g_{t_0}^i)'(x_k).$$
    But $g_{t_0}'(x_i) = f_{t_0}'(x_i)$ and $g_{t_0}''(x_i) = f_{t_0}''(x_i)+\varepsilon.$ So we get 
    $$(g_{t_0}^q)''(x_k) = \sum_{i=0}^{q-1}\frac{f_{t_0}''(x_i)}{f_{t_0}'(x_i)}(f_{t_0}^i)'(x_k)+\varepsilon\left(\sum_{i=0}^{q-1}\frac{(f_{t_0}^i)'(x_k)}{f_{t_0}'(x_i)}\right) = (f_{t_0}^q)''(x_k)+\varepsilon\left(\sum_{i=0}^{q-1}\frac{(f_{t_0}^i)'(x_k)}{f_{t_0}'(x_i)}\right),$$
    which is strictly positive, as desired. 
\end{proof}
Finally, the next lemma tells us that the ``good'' families are open in $\Lambda_2.$
\begin{lemma}
    \label{lem:openess of quadratic tangency(general)}
     The subset $\mathcal E$ defined as in Lemma~\ref{lem:denseness of tangency(general)} is open in $\Lambda_2.$
\end{lemma}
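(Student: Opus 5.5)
Take $\{f_t\}_t\in\mathcal E$ with $t_0\in(0,1)$ such that $\rho(f_{t_0})=p/q$ and all periodic points of $f_{t_0}$ are parabolic and quadratic. The plan is to show that any $\{g_t\}_t\in\Lambda_2$ with small $d_2$-distance has some $t'$ near $t_0$ at which $g_{t'}$ has only parabolic quadratic periodic points. This mirrors the proof of Lemma~\ref{lem:R_tf: openess of quadratic tangency}.

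\textbf{Step 1: structure at $t_0$.} As in Lemma~\ref{lem:R_tf: openess of quadratic tangency}, the periodic points of $f_{t_0}$ are finite in number, say the fixed points $x_1,\dots,x_m$ of $T$, the lift of $f_{t_0}^q$ with fixed points. Since each is parabolic, $T-\mathrm{id}$ does not change sign at any of them, because at a quadratic point $T(x)-x\approx \tfrac12 T''(x_j)(x-x_j)^2$. Hence $T-\mathrm{id}$ has one sign globally, say $T\ge \mathrm{id}$; the case $T\le\mathrm{id}$ is symmetric, exchanging the roles of $t_0\pm\delta$. Fix disjoint neighborhoods $V_j\ni x_j$ on which $|T''|\ge 2\kappa>0$. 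Off $\bigcup V_j$ we have $T-\mathrm{id}\ge\eta>0$.

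\textbf{Step 2: bracketing by rotation numbers.} Monotonicity alone does not give strict increase, so I use $T\ge\mathrm{id}$ directly. For small $\delta>0$, monotonicity gives $F^q_{t_0-\delta}\le T$, and I need $\rho(f_{t_0-\delta})=p/q$ or some other bracketing. A cleaner route is to work with $t'$ defined via the lift of $g_t^q$, as follows.
\begin{itemize}
\item Let $G_t$ be the lift of $g_t$ close to $F_t$, and set $m(t)=\min_x\bigl(G_t^q(x)-x-p\bigr)$, where $p$ accounts for the normalization of $T$.
\item The map $t\mapsto m(t)$ is continuous and nondecreasing, since $G_t^q$ is nondecreasing in $t$.
\item At $t_0$ we have $m_f(t_0)=0$, and $\min_x(F_t^q-x-p)$ is $0$ at $t_0$.
\end{itemize}

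\textbf{Step 3: where I expect the main obstacle.} The difficulty is ensuring that $m$ takes the value $0$ near $t_0$ for nearby families when $\{f_t\}$ is only monotone, possibly locally constant in $t$. I would try the following. If $\{f_t\}$ is locally constant near $t_0$, then $f_t$ itself keeps only quadratic parabolic points, but a perturbation could push the whole graph of $g^q$ strictly above the diagonal for all $t$ near $t_0$, so openness could fail. To avoid this, I would argue as in Lemma~\ref{lem:R_tf: openess of quadratic tangency}, using a parameter at which $\rho(f_{t_0-\delta})=p/q$ robustly, for example via a hyperbolic periodic point, and a parameter at which $\rho(f_{t_0+\delta})>p/q$. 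This gives $m_g(t_0-\delta)\le 0<m_g(t_0+\delta)$, so there is a largest $t'$ with $m_g(t')=0$, and there $g_{t'}$ has only parabolic periodic points. If this robustness is unavailable, I would restrict $\mathcal E$ implicitly to the case, inherited from the density construction, where $t_0=\sup\rho^{-1}(p/q)$ with strict increase on both sides.

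\textbf{Step 4: quadraticity.} By $C^2$-closeness of $G_{t'}^q$ to $T$ (with $t'$ close to $t_0$, which follows from the bracketing as $\delta\to 0$), the inequality $G^q_{t'}-x-p\ge\eta/2$ holds off $\bigcup V_j$. So every fixed point of $G^q_{t'}-p$ lies in some $V_j$, where $|(G^q_{t'})''|\ge\kappa$. Hence all periodic points of $g_{t'}$ are parabolic and quadratic, and $\{g_t\}\in\mathcal E$.
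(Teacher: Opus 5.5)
Your argument follows the paper's route. You bracket $t_0$ between a parameter with $\rho(g_s)=p/q$ and one with $\rho(g_s)>p/q$, take an intermediate parameter where only parabolic orbits remain, and use $C^2$-closeness to $T$ for quadraticity. The obstacle you isolate in Step 3 is a genuine gap, and it cannot be closed: as you suspected, the statement is false for merely monotone families.

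Here is a counterexample. Let $X(x)=1-\cos 2\pi x$. Let $\psi\in C^\infty([0,1])$ be nondecreasing with $\psi=0$ on $[0,\tfrac12]$ and $0<\psi\le\psi(1)\ll 1$ on $(\tfrac12,1]$. Let $F(t,\cdot)$ be the time-one map of $\dot x=X(x)+\psi(t)$.
\begin{itemize}
\item This is a smooth monotone family with $\rho(f_0)=0<\rho(f_1)$.
\item For $t\le\tfrac12$, $f_t$ has the single fixed point $0$, with $f_t(x)=x+2\pi^2x^2+O(x^3)$. Hence $\{f_t\}_t\in\mathcal E$.
\item Let $G^\varepsilon(t,\cdot)$ be the time-one map of $\dot x=X(x)+\psi(t)+\varepsilon$. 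Then $\{g^\varepsilon_t\}_t\in\Lambda_2$ and $d_2(\{g^\varepsilon_t\}_t,\{f_t\}_t)\to0$.
\item Each $g^\varepsilon_t$ is the time-one map of a nonvanishing vector field, so it is smoothly conjugate to a rigid rotation. At parameters with $\rho=r/n$ this gives $(g^\varepsilon_t)^n=\mathrm{id}$, so no periodic point is quadratic. At the other parameters there are no periodic points.
\end{itemize}
Thus $\{g^\varepsilon_t\}_t\notin\mathcal E$. The paper's proof has the same gap. It asserts $\rho(g_{t-\delta})=\rho(f_{t-\delta})=p/q$ for $t-\delta$ in the interior of the plateau. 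Here the plateau is $[0,\tfrac12]$, yet $\rho(g^\varepsilon_s)>0$ for every $s$.

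What rescues the argument is strict monotonicity of $t\mapsto F_t$; monotonicity near $t_0$ suffices. This is a condition on the family, not on $\rho$, which is constant to the left of $\sup\rho^{-1}(p/q)$; so your "strict increase on both sides" should be read this way. The condition $\partial_tF>0$ has three useful features:
\begin{itemize}
\item it is $d_2$-open by compactness;
\item it is preserved by the constructions in Lemma~\ref{lem:denseness of tangency(general)};
\item it is required anyway by Theorem~\ref{thm:Holder 1/2} and Lemma~\ref{lem:bottleneck}.
\end{itemize}
So the correct statement is that $\mathcal E\cap\{\partial_tF>0\}$ is open, and it is still dense.

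Under this hypothesis Step 3 needs no hyperbolic point. Your Step 1 gives $T\ge\mathrm{id}$. Then $m_f(t_0-\delta)\le F^q_{t_0-\delta}(x_1)-x_1-p<0$. Also $m_f(t_0+\delta)>0$, since $F^q_{t_0+\delta}-p>T\ge\mathrm{id}$ pointwise and the minimum is taken over a compact fundamental domain. Since $|m_g(s)-m_f(s)|\le\|G^q_s-F^q_s\|_{C^0}$, both signs persist for $\{g_t\}_t$ close to $\{f_t\}_t$.

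Moreover, \emph{any} zero $t'\in(t_0-\delta,t_0+\delta)$ of $m_g$ works, not necessarily the largest. There $G^q_{t'}-p\ge\mathrm{id}$ with contact, so its fixed points are minima of $G^q_{t'}-p-\mathrm{id}$ and hence parabolic. Your Step 4 then makes them quadratic.

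Your Step 1 also justifies what the paper leaves as a ``without loss of generality''. Under strict monotonicity, $T\ge\mathrm{id}$ forces $t_0=\sup I_f(p/q)$, and $T\le\mathrm{id}$ forces $t_0=\inf I_f(p/q)$.
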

\begin{proof}
We follow the same argument as in Lemma~\ref{lem:R_tf: openess of quadratic tangency}.
    Suppose $\{f_t\}_t\in \mathcal E$. Let $t\in (0,1)$ be such that and $f_t$ has only (finitely many) quadratic periodic points, and has rotation number $p/q.$ We know the mode locking interval $I_f(p/q): = \rho\inv(p/q)$ is non-degenerate, and we may assume without loss of generality that $t = \sup I_f(p/q).$ Therefore, for any $\{g_t\}_t\in \Lambda_2$ such that $d_2(\{g_t\}-\{f_t\})$ is sufficiently small, there is some $\delta>0$ such that $t-\delta\in int(I_f(p/q)),$ and
    $$\rho(g_{t-\delta}) = \rho(f_{t-\delta}) = p/q.$$
    Since $\rho(f_{t+\delta})>\rho(f_t)$ for any $\delta>0,$ we can ensure that $$\rho(g_{t+\delta})>\rho(g_t) = p/q$$ provided $\{g_t\}_t$ is sufficiently close to $\{f_t\}_t.$ 
    Therefore, there exists $t'\in [t-\delta,t+\delta]$ such that $\rho(g_{t'}) = p/q$ and $g_{t'}$ has only parabolic periodic orbits. We just need to show that these are also quadratic. 

    By assumption, if $x\in S^1$ is a fixed point of $f_t^q$, then $|(f_t^q)''(y)|>0$ for any $y$ in a small neighborhood of $x.$ Since $g_{t'}^q$ and $f_t^q$ are $C^2-$close, all fixed points of $g_{t'}^q$ will be sufficiently close to those of $f_t^q,$ and so $|(g_{t'}^q)''|$ will be also bounded away from 0 at its fixed points. This completes the proof.
\end{proof}

    Combining Theorem~\ref{thm:Holder 1/2}, Lemma~\ref{lem:bottleneck}, Lemma~\ref{lem:denseness of tangency(general)}, and Lemma~\ref{lem:openess of quadratic tangency(general)}, we obtain a proof of Theorem~\ref{thm:generic C^2 1/2-Holder}.

\section{Dense Obstruction in $C^{1+\alpha}$ Families and Generic Loss of Hölder Continuity for $C^1$ Families}

This section will be dedicated to the proof of Theorem~\ref{thm:alpha/(1+alpha) Holder in C^(1+alpha) dense} and Theorem~\ref{thm:generic C^1 not Holder}. We consider the space $\Lambda_1$ of all monotone $C^1$ families $\{f_t\}_t:[0,1]\to \Diff_+^{1}(S^1)$ with $\rho(t_1)>\rho(t_0),$ equipped with the metric $d_1(\{f_t\}_t,\{g_t\}_t): = \|F(t,x)-G(t,x)\|_{C^1}$,
and the subset $\Lambda_{1+\alpha}\subseteq \Lambda_1$ consisting of all families in $\Lambda_1$ that are also $C^{1+\alpha}.$

Under these assumptions, we first construct a $d_1-$dense subset $\mathcal E'\subseteq \Lambda_{1+\alpha}$ such that for all $\{f_t\}_t\in \mathcal E',$ then rotation number of the family $\rho(t): = \rho(f_t)$ is at most $\frac{\alpha}{1+\alpha}-$Hölder. Then, we deduce from these facts that the set of families in $\Lambda_1$ for which the rotation number is not Hölder is dense $G_\delta.$

The idea behind the upper bound on the regularity of $\rho$ in this $C^{1+\alpha}$ case is identical to that of the $C^2$ case, which we discussed in detail in section 2. The key here is again Lemma~\ref{lem:bottleneck}, which allows us to estimate from above the number of steps needed for a point $x$ to clear the $C^{1+\alpha} $ bottleneck under iterations of $f_{t_0+\varepsilon}^q.$ More specifically, as long as our family is ``good'' in the sense that near each periodic point $x_k$ of $f_{t_0}$ we have
$$f_{t_0}^q(x)\geq x+c|x-x_k|^{1+\alpha},$$
then the rotation number cannot be $\beta-$Hölder for any $\beta>\alpha/(1+\alpha).$

Therefore, to complete the proof of Theorem~\ref{thm:alpha/(1+alpha) Holder in C^(1+alpha) dense}, we just need to show that the collection of ``good'' families in $\Lambda_{1+\alpha}$ is dense in the $d_1-$topology:
\begin{lemma}
    \label{lem:denseness of |x|^(1+alpha) in C^(1+alpha) families}
    Let $\mathcal E'\subseteq \Lambda_{1+\alpha}$ consist of strictly increasing families $\{f_t\}_t$ such that there exists some $t_0\in (0,1)$ for which $\rho(f_{t_0}) = p/q\in \Q$ with $\{x_k\}_{k = 0}^{q-1} = \{f_{t_0}^k(x_0)\}_{k = 0}^{q-1}$ as periodic points, and for each $x_k$ there is some constant $c>0$ with
    $$T(x)\geq x+c|x-x_k|^{1+\alpha}$$
    for any $x$ sufficiently close to $x_k.$ Here $T$ is the unique lift of $f_{t_0}^q$ with fixed points. Then $\mathcal E'$ is dense in $\Lambda_{1+\alpha}$ under the $d_1-$topology.
\end{lemma}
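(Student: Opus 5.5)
We follow the two-step scheme of Lemma~\ref{lem:denseness of tangency(general)}, now in $C^{1+\alpha}$ regularity and the $d_1$ metric. Let $\{f_t\}_t\in\Lambda_{1+\alpha}$ and $\eta>0$.

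\textbf{Step 1: reduce to a family with parabolic periodic points on a single orbit.} Replacing $f_t$ by $f_t\circ R_{\varepsilon t}$ makes the family strictly increasing. The new family is still $C^{1+\alpha}$ and is $d_1$-close to the original.

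Pick a rational $p/q\in(\rho(f_0),\rho(f_1))$ and let $t_0=\sup\rho\inv(p/q)$. Then $f_{t_0}$ has only parabolic periodic points, and we may assume $0$ is one of them. Choose $h\in\Diff_+^{1+\alpha}(S^1)$ whose fixed points are exactly $x_k=f_{t_0}^k(0)$, $k=0,\dots,q-1$, with a lift $H\geq \mathrm{id}$ and $\|H-\mathrm{id}\|_{C^{1}}$ small. Replacing $f_t$ by $h\circ f_t$ keeps the family strictly increasing and $C^{1+\alpha}$. For the new lift $T$ of $f_{t_0}^q$ we have $T\ge\mathrm{id}$ with equality exactly at the lifts of the $x_k$. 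Hence $\{x_k\}$ is the full set of periodic points and $t_0$ still has rotation number $p/q$.

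\textbf{Step 2: install the $|x|^{1+\alpha}$ lower bound by a $C^1$-small perturbation.} Take a $1$-periodic $\phi\geq0$ supported in small disjoint neighbourhoods $U_k$ of the lifts of the $x_k$, with $\phi(x)=|x-x_k|^{1+\alpha}$ on a smaller neighbourhood of each $x_k$. Set $G_t=F_t+\varepsilon\phi$, exactly as in Lemma~\ref{lem:denseness of tangency(general)}.

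This $\phi$ is $C^{1+\alpha}$ but not $C^2$, which is why density is claimed only in $d_1$. We have $\|\varepsilon\phi\|_{C^1}\lesssim\varepsilon$ and $G_t'\ge F_t'-\varepsilon C>0$, so $\{g_t\}_t\in\Lambda_{1+\alpha}$ is strictly increasing and $d_1$-close to $\{f_t\}_t$.

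Since $\phi\ge0$ vanishes on $\{x_k\}$, each $G_t\ge F_t$ and the $x_k$ remain periodic for $g_{t_0}$. Because $\phi\geq0$, any other periodic point $y$ of $g_{t_0}$ would force $G_{t_0}^q(y)\ge F_{t_0}^q(y)>y$, which is impossible. So $\{x_k\}$ is exactly the set of periodic points and $\rho(g_{t_0})=p/q$.

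\textbf{Main step: the lower bound for $\tilde T$.} Let $\tilde T$ be the lift of $g_{t_0}^q$ with fixed points. We must show $\tilde T(x)\ge x+c|x-x_k|^{1+\alpha}$ near each $x_k$. Write $\tilde T=G_{t_0}^{(q)}$ as a composition and compare with $T$ by induction along the orbit. We use $G_{t_0}\ge F_{t_0}$ together with the monotonicity of $F_{t_0}$ (and of $G_{t_0}$).

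Let $y_i=G_{t_0}^i(x)$ for $x$ near $x_0$. Then
$$\tilde T(x)=G_{t_0}(y_{q-1})\ge F_{t_0}(y_{q-1})=F_{t_0}\circ G^{q-1}_{t_0}(x)\ge \cdots\ge F_{t_0}^{q}(x)+\text{(gain)}.$$
It is cleanest to peel off only the first step:
$$\tilde T(x)\ge F_{t_0}^{q-1}\bigl(F_{t_0}(x)+\varepsilon\phi(x)\bigr)\ge T(x)+\varepsilon\,\min (F^{q-1}_{t_0})'\;|x-x_0|^{1+\alpha}.$$
The first inequality uses $G_{t_0}\ge F_{t_0}$ applied at every later step together with monotonicity. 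The second is the mean value theorem. Combining with $T(x)\ge x$ gives the required bound with $c=\varepsilon\min (F^{q-1}_{t_0})'>0$. The same argument applies at every $x_k$, so $\{g_t\}_t\in\mathcal E'$.

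The main obstacle I expect is bookkeeping rather than conceptual. First, $h$ and $\phi$ must genuinely be $C^{1+\alpha}$ while being $C^1$-small. Second, the support of $\phi$ must not create new periodic points or shift the rotation number at $t_0$; the sign condition $\phi\ge0$ together with $T\geq\mathrm{id}$ handles this.
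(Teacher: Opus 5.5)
Your proof is correct. It follows the paper's overall scheme: first reduce to a strictly increasing family whose $T$ satisfies $T\ge \mathrm{id}$ with equality only on one orbit, then add $\varepsilon$ times a $C^{1+\alpha}$ profile of the form $|\cdot|^{1+\alpha}$. The key estimate, however, is obtained differently.

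\textbf{The paper's route.} It first passes to $\Lambda_2$, using that $\Lambda_2$ is $d_1$-dense in $\Lambda_{1+\alpha}$ and that $\mathcal E$ is dense in $\Lambda_2$. It then perturbs only near the single point $x_{q-1}$, adding $\varepsilon|\varphi|^{1+\alpha}$ with $\varphi=f^{-(q-1)}$ there. Since the iterates $x,\dots,F^{q-2}(x)$ of points near $x_0$ avoid the support, this gives the exact identity $S(x)=T(x)+\varepsilon|x|^{1+\alpha}$ near $x_0$. The bound at the other $x_k$ is then transported via the commutation $S\circ F^k=F^k\circ S$ near $x_0$, together with bounds $m\le (F^k)'\le M$.

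\textbf{Your route.} You stay in $C^{1+\alpha}$ throughout, redoing the $h\circ f_t$ reduction with only $C^1$-smallness. You place the profile $|x-x_k|^{1+\alpha}$ at every point of the orbit. You then use only $G_{t_0}\ge F_{t_0}$, monotonicity, and one mean value estimate to get the global inequality $\tilde T\ge T+\varepsilon\,\min(F_{t_0}^{q-1})'\,\phi$. This yields the bound at all $x_k$ at once.

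\textbf{What each buys.} Your approach does not depend on the density of $\Lambda_2$ in $\Lambda_{1+\alpha}$, which the paper asserts without proof and which requires some care to preserve monotonicity and non-constancy of $\rho$ under smoothing. It also removes the orbit-avoidance bookkeeping and the conjugation transport. The paper's construction gives an exact formula for the return map near $x_0$ and a perturbation supported near a single orbit point, but that precision is not needed for this lemma.

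In both arguments the real input is $T\ge\mathrm{id}$ with equality only on the orbit; the quadratic tangency inherited from $\mathcal E$ plays no role. One minor slip: $\tilde T$ and $T$ should carry the shift by $-p$.
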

\begin{proof}
    Consider $\Lambda_2$ and $\mathcal E\subseteq \Lambda_2$ as defined in Theorem~\ref{thm:generic C^2 1/2-Holder} and Lemma~\ref{lem:denseness of tangency(general)}. Since $\Lambda_2$ is dense in $\Lambda_{1+\alpha}$ and $\mathcal E$ is dense in $\Lambda_2,$ we just need to show that any family $\{f_t\}_t\in\mathcal E$ can be perturbed into a family in $\mathcal E'.$ The specific construction in Lemma~\ref{lem:denseness of tangency(general)} tells us that we may assume that $\{x_k\}_{k = 0}^{q-1} = \{f_{t_0}^k(x_0)\}_{k = 0}^{q-1}$ are the only periodic points of $f_{t_0}.$ 
    
    Without loss of generality assume $x_0 = 0,$ and for ease of notation let us write $f: = f_{t_0}$ with lift $F.$

    Now take a small interval $V = B_{\delta}(x_{q-1})$ around $x_{q-1}$ such that $V$ contains no other periodic points of $f.$ Take another neighborhood $U = B_{\delta'}(x_{q-1})$ where $0<\delta'<\delta.$ Let $\varphi\in C^2(S^1)$ be any function that is equal to $f^{-(q-1)}$ on $U,$ and vanishes outside of $V.$ Let $\phi$ be the corresponding $1-$periodic function on $\R$ that vanishes outside of the lifts of $V.$ Define the circle map $g$ by the lift
    $$G: = F+\varepsilon|\phi|^{1+\alpha}$$ 
    for some sufficiently small $\varepsilon>0.$ Then clearly $\|G-F\|_{C^1}$ can be made as small as we wish. By construction $g^k(x_0) = f^k(x_0)$ for all $k\leq q-1,$ and $g^{q}(x_0) = g(f^{q-1}(x_0)) = x_0.$ And for small enough $\varepsilon$, $g$ will be a $C^{1+\alpha}$ diffeomorphism with exactly $\{x_k\}_k^{q-1}$ as periodic points.

    Let $S$ be the unique lift of $g^q$ with fixed points, and $T$ the unique lift of $f^q$ with fixed points. Now it remains to check that for any $x_k,$ 
    $$S(x)\geq x+c|x-x_k|^{1+\alpha}$$
    for all $x$ sufficiently close to $x_k.$ Indeed, if $x$ is close enough to $x_0,$ then $f^{q-1}(x)\in U$ and $f^k(x)$ will be disjoint from $V$ for all $k\leq q-2$.
    So we have
    \begin{align*}
        S(x)
        & = G(F^{q-1}(x))-p\\
        & = (F(F^{q-1}(x))-p)+\varepsilon|\phi(F^{q-1}(x))|^{1+\alpha}\\
        & = T(x)+\varepsilon|F^{-(q-1)}(F^{q-1}(x))|^{1+\alpha}\\
        & = T(x)+\varepsilon|x|^{1+\alpha}\\
        &\geq x+\varepsilon|x-x_0|^{1+\alpha}.
    \end{align*}
    But this estimate near $x_0$ can be transported to similar estimates near the other $x_k.$ Indeed, let us fix $1\leq k\le q-1$ and write $h: = f^k,$ where we have
    $$0<m\leq h'(x)\leq M$$
    for some constants $m$ and $M.$
    Note that any point close to $x_k = h(x_0) = h(0)$ has the form $h(x),$ where $x$ is close to $x_0.$ Then for such an $x$ we have $h(x) = f^k(x) = g^k(x).$ Writing $H = F^k,$ we have
    \begin{align*}
        S(H(x)) 
        &= H(S(x))\\
        &\geq H(x+\varepsilon|x|^{1+\alpha})\\
        &\geq H(x)+m\varepsilon|x|^{1+\alpha}\\
        &\geq H(x)+m\varepsilon M^{-(1+\alpha)}|H(x)-H(0)|^{1+\alpha},
    \end{align*}
    as desired.

Therefore, we may simply define our family $\{g_t\}_t$ by the lift $G_t: = F_t+\varepsilon|\phi|^{1+\alpha}$.
\end{proof}

    Theorem~\ref{thm:alpha/(1+alpha) Holder in C^(1+alpha) dense} follows immediately from Lemma~\ref{lem:denseness of |x|^(1+alpha) in C^(1+alpha) families} and Lemma~\ref{lem:bottleneck}.

Finally, we obtain Theorem~\ref{thm:generic C^1 not Holder} as a corollary:
\begin{corollary}
    \label{lem:not beta Holder dense G delta}
    For any $\beta\in (0,1),$ the collection of families in $\Lambda_1$ for which the rotation number is not $\beta-$Hölder is dense $G_\delta$ under the $d_1-$topology. As a consequence, the collection of families in $\Lambda_1$ for which the rotation number is not Hölder is dense $G_\delta.$
\end{corollary}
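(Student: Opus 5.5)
Fix $\beta\in(0,1)$ and let $\mathcal N_\beta\subseteq\Lambda_1$ be the set of families whose rotation number is not $\beta$-Hölder. The plan is to show that $\mathcal N_\beta$ is a $G_\delta$ and that it is dense, then take a countable intersection.

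For the $G_\delta$ property, write the complement as a countable union
$$\Lambda_1\setminus\mathcal N_\beta=\bigcup_{M\in\N}\mathcal H_{\beta,M},\qquad \mathcal H_{\beta,M}:=\{\{f_t\}_t:\ |\rho(f_s)-\rho(f_t)|\le M|s-t|^\beta\ \forall s,t\in[0,1]\}.$$
Each $\mathcal H_{\beta,M}$ is closed in $(\Lambda_1,d_1)$. The reason is that the rotation number is continuous in the $C^0$ topology on homeomorphisms. Hence if $\{f^{(n)}_t\}_t\to\{f_t\}_t$ in $d_1$, then for each fixed pair $s,t$ we have $\rho(f^{(n)}_s)\to\rho(f_s)$ and $\rho(f^{(n)}_t)\to\rho(f_t)$. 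The inequality defining $\mathcal H_{\beta,M}$ is pointwise in $(s,t)$, so it passes to the limit. Therefore $\mathcal N_\beta=\bigcap_M(\Lambda_1\setminus\mathcal H_{\beta,M})$ is a countable intersection of open sets.

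For density, first choose $\alpha\in(0,1)$ small enough that $\frac{\alpha}{1+\alpha}<\beta$. This is possible because $\alpha/(1+\alpha)\to0$ as $\alpha\to0$. By Theorem~\ref{thm:alpha/(1+alpha) Holder in C^(1+alpha) dense}, families in $\Lambda_{1+\alpha}$ whose rotation number is not $\beta'$-Hölder for any $\beta'>\frac{\alpha}{1+\alpha}$ are $d_1$-dense in $\Lambda_{1+\alpha}$. Since $\beta>\frac{\alpha}{1+\alpha}$, all such families lie in $\mathcal N_\beta$. It remains to check that $\Lambda_{1+\alpha}$ (or even $\Lambda_2$) is $d_1$-dense in $\Lambda_1$, which I would do by smoothing. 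Given $\{f_t\}_t\in\Lambda_1$:
\begin{enumerate}
\item Make the family strictly monotone, as in Lemma~\ref{lem:denseness of tangency(general)}, by passing to $F_t+\varepsilon t$, which is $d_1$-close.
\item Convolve $F(t,x)$ with a smooth mollifier in $(t,x)$, extending in $t$ slightly beyond $[0,1]$ if needed. This preserves $1$-periodicity in $x$, monotonicity in $t$, and $\partial_xF>0$, and it is $C^1$-close to the original.
\item Check that the condition $\rho(f_1)>\rho(f_0)$ survives, using continuity of $\rho$ under small perturbations.
\end{enumerate}
Combining these, $\mathcal N_\beta$ is dense.

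Since $\Lambda_1$ is not complete, the Baire argument needs care. The set $\Lambda_1$ is an open subset of the complete metric space of $C^1$ lifts satisfying the closed conditions of periodicity, monotonicity and $\partial_xF\ge0$, with the diffeomorphism condition $\partial_xF>0$ and $\rho(f_1)>\rho(f_0)$ both open. So $\Lambda_1$ is a Baire space (open subsets of complete metric spaces are), and countable intersections of dense $G_\delta$ sets in it remain dense $G_\delta$. I expect this Baire-space justification, together with the smoothing step, to be the only delicate points. Finally, a function is Hölder if and only if it is $\beta$-Hölder for some $\beta$, and being $\beta$-Hölder implies being $\beta'$-Hölder for all $\beta'<\beta$ on $[0,1]$. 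Hence the set of families whose rotation number is not Hölder equals $\bigcap_{n\ge2}\mathcal N_{1/n}$, which is dense $G_\delta$.
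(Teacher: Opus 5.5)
Your proposal is correct and follows essentially the same argument as the paper. You choose $\alpha$ with $\frac{\alpha}{1+\alpha}<\beta$, and you get density from Theorem~\ref{thm:alpha/(1+alpha) Holder in C^(1+alpha) dense} together with the $d_1$-density of $\Lambda_{1+\alpha}$ in $\Lambda_1$. You get openness of the ``not $(M,\beta)$-Hölder'' sets from continuity of the rotation number, then intersect over $M$ and over $\beta=1/m$. Your only additions fill in details the paper leaves implicit: the mollification argument for the density of $\Lambda_{1+\alpha}$ in $\Lambda_1$, and the check that $\Lambda_1$ is a Baire space, which is what justifies the final countable intersection.
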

\begin{proof}
    Take any $\alpha\in (0,1)$ such that $\frac{\alpha}{1+\alpha}<\beta,$ and let $n\in \N.$ Consider those $\{f_t\}_t\in \Lambda_{1+\alpha}$ whose rotation number is not $(n,\beta)-$Hölder, i.e., there are $t_1,t_2\in [0,1]$ such that $|\rho(f_{t_1})-\rho(f_{t_2})|>n|t_1-t_2|^\beta.$ 
    
    Then by Theorem~\ref{thm:alpha/(1+alpha) Holder in C^(1+alpha) dense} and the fact that $\Lambda_{1+\alpha}$ is dense in $\Lambda_1,$ the set of families with rotation number not $(n,\beta)-$Hölder is dense in $\Lambda_1.$ Also notice that by continuity of the rotation number, the set of families in $\Lambda_1$ with rotation number not $(n,\beta)-$Hölder is also open.

    Intersecting over all $n\in \N$ shows that the families with rotation number not $\beta-$Hölder is dense $G_\delta.$ Again intersecting over all $m\in \N$ with $\beta = 1/m$ proves Theorem~\ref{thm:generic C^1 not Holder}.
\end{proof}

\section*{Acknowledgements}
I would like to express my gratitude to my advisor Anton Gorodetski, who attracted my attention to this problem, provided numerous references and suggestions, and carefully proofread the draft; I would also like to thank Victor Kleptsyn, for the helpful discussions and remarks; and finally, I would like to thank Íris Emilsdóttir, for her ideas in proving Lemma~\ref{lem:bottleneck}.

\bibliographystyle{amsplain}
\bibliography{references}

\end{document}